\newtheorem{theorem}{Theorem}
\newtheorem{defn}{Definition}
\newtheorem{lemma}{Lemma}
\newtheorem{proposition}[lemma]{Proposition}
\newtheorem{remark}{Remark}
\newcommand{\C}{\mathbb{C}}
\newcommand{\R}{\mathbb{R}}
\newcommand{\Z}{\mathbb{Z}}
\newcommand{\dual}[2]{\langle #1, #2\rangle}
\newcommand{\pd}{\partial}
\newcommand{\even}{{\rm even}}
\newcommand{\eps}{\varepsilon}
\newcommand{\reP}{{\rm Re}\,}
\newcommand{\imP}{{\rm Im}\,}
\begin{document}

\begin{center}
\large{\textbf{
Instability of standing waves for a system of nonlinear Schr\"odinger 
equations in a degenerate case}}
\end{center}

\vspace{5mm}

\begin{center}
{Shotaro Kawahara} \hspace{1mm} and \hspace{1mm}
{Masahito Ohta}
\end{center}
\begin{center}
Department of Mathematics, 
Tokyo University of Science, \\
1-3 Kagurazaka, Shinjuku-ku, Tokyo 162-8601, Japan
\end{center}

\begin{abstract}
We study a system of nonlinear Schr\"odinger equations with cubic interactions  
in one space dimension. 
The orbital stability and instability of semitrivial standing wave solutions are studied 
for both non-degenerate and degenerate cases. 
\end{abstract}

\begin{center}
{\small 
Dedicated to Professor Nakao Hayashi on the occasion of his sixtieth birthday} 
\end{center}

\section{Introduction}

In this paper, we study the orbital stability and instability of standing wave solutions 
for the following system of nonlinear Schr\"odinger equations with cubic interactions 
in one space dimension:
\begin{equation} \label{nls}
\begin{cases}
i\pd_t u_1=-\pd_x^2 u_1-\kappa_1 |u_1|^2 u_1-\gamma\,u_2^2\,\overline{u_1}, \\
i\pd_t u_2=-\pd_x^2 u_2-\kappa_2 |u_2|^2 u_2-\gamma\, u_1^2\,\overline{u_2}, 
\end{cases}
\end{equation}
where $u_1$ and $u_2$ are complex-valued functions of $(t,x)\in \R\times \R$, 
and $\kappa_1$, $\kappa_2$ and $\gamma$ are positive constants. 
The system \eqref{nls} appears in various areas of physics 
such as nonlinear optics, Bose-Einstein condensates, and so on 
(see, e.g., \cite{agr, IMW, KS, WC}). 

By the standard theory (see, e.g., \cite[Chapter 4]{caz}), 
the Cauchy problem for \eqref{nls} 
is globally well-posed in the energy space $H^1(\R,\C)^2$, 
and the energy $E$ and the charge $Q$ are conserved, where 
\begin{align*}
&E(\vec u)=\sum_{j=1}^{2}\left(
\frac{1}{2} \|\pd_x u_j\|_{L^2}^2-\frac{\kappa_j}{4} \|u_j\|_{L^4}^4\right)
-\frac{\gamma}{2}\, \reP \int_{\R} u_1^2\,\overline{u_2}^2\,dx, \\
&Q(\vec u)=\frac{1}{2} \sum_{j=1}^{2} \|u_j\|_{L^2}^2
\end{align*}
for $\vec u:=(u_1,u_2)\in H^1(\R,\C)^2$. 
Note that \eqref{nls} is written in a Hamiltonian form $i\pd_t \vec u=E'(\vec u)$, 
and the conservation of charge follows from the invariance of $E$ 
under gauge transform 
$$E(e^{i\theta} \vec u)=E(e^{i\theta}u_1,e^{i\theta}u_2)=E(\vec u)$$
for  $\theta\in \R$ and $\vec u\in H^1(\R,\C)^2$. 

We study the orbital stability and instability of semitrivial standing wave solutions 
$e^{i\omega t} \vec \phi_{\omega}(x)$ for \eqref{nls},
where $\omega>0$ is a constant, 
\begin{equation} \label{ST1}
\vec \phi_{\omega}(x)
:=\left(\frac{1}{\sqrt{\kappa_1}} \varphi_{\omega}(x),0\right), 
\end{equation}
and $\varphi_{\omega}(x)=\sqrt{2\omega} \, {\rm sech} (\sqrt{\omega}\,x)$ 
is a positive and even solution of 
$$-\pd_x^2 \varphi+\omega \varphi-\varphi^3=0, \quad  x\in \R.$$

We are mainly interested in the instability of $e^{i\omega t} \vec \phi_{\omega}(x)$ rather than the stability, 
and we assume the even symmetry for simplicity. 
We denote the set of even functions in $H^1(\R)$ by $H^1_{\even}(\R)$, 
and define $X=H^1_{\even}(\R,\C)^2$. 
Note that $\varphi_{\omega}\in H^1_{\even}(\R)$ and $\vec \phi_{\omega}\in X$. 
Moreover, by the even symmetry of \eqref{nls} 
and the uniqueness of solutions to the Cauchy problem for \eqref{nls}, 
if $\vec u_0\in X$, then the solution $\vec u(t)$ of \eqref{nls} 
with $\vec u(0)=\vec u_0$ satisfies $\vec u \in C(\R,X)$. 

\begin{defn} \label{def1}
We say that the standing wave solution $e^{i \omega t} \vec \phi_{\omega}$ of \eqref{nls} 
is stable if for any $\varepsilon>0$ there exists $\delta>0$ with the following property. 
If $u_0\in X$ satisfies $\|\vec u_0-\vec \phi_{\omega}\|_X<\delta$, 
then the solution $\vec u(t)$ of \eqref{nls} with $\vec u(0)=\vec u_0$ satisfies 
$$\inf_{\theta\in \R}
\|\vec u(t)-e^{i\theta} \vec \phi_{\omega}\|_X<\varepsilon$$ 
for all $t\in \R$. 
Otherwise, $e^{i \omega t} \vec \phi_{\omega}$ is called unstable. 
\end{defn}

We now state our main results in this paper. 

\begin{theorem} \label{thm1}
Let $\kappa_1$, $\kappa_2$, $\gamma$ and $\omega$ be positive constants. 
Then, the semitrivial standing wave solution $e^{i\omega t} \vec \phi_{\omega}(x)$ 
of \eqref{nls} is stable if $\gamma<\kappa_1$, 
and unstable if $\gamma>\kappa_1$. 
\end{theorem}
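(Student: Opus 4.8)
The plan is to view $\vec\phi_\omega$ as a critical point of the action $S_\omega:=E+\omega Q$ on $X$ — the profile equations obtained by substituting $\vec u=e^{i\omega t}\vec\phi$ into \eqref{nls} are precisely $S_\omega'(\vec\phi_\omega)=0$ — and to analyze the Hessian $\mathcal L:=S_\omega''(\vec\phi_\omega)$. Writing perturbations in real coordinates $v_j=a_j+ib_j$, the decisive structural feature is that $\mathcal L$ \emph{decouples between the two components}: since the interaction term $\reP\int u_1^2\overline{u_2}^2\,dx$ is separately quadratic in $u_1$ and in $u_2$, its mixed second derivative at $\vec\phi_\omega=(\kappa_1^{-1/2}\varphi_\omega,0)$ vanishes, so $\mathcal L=\mathcal L_1\oplus\mathcal L_2$. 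On the first component $\mathcal L_1$ is the classical cubic-NLS linearization, carried by $L_{1,+}=-\pd_x^2+\omega-3\varphi_\omega^2$ and $L_{1,-}=-\pd_x^2+\omega-\varphi_\omega^2$; on the second component $\mathcal L_2$ is carried by $L_{2,+}=-\pd_x^2+\omega-(\gamma/\kappa_1)\varphi_\omega^2$ and $L_{2,-}=-\pd_x^2+\omega+(\gamma/\kappa_1)\varphi_\omega^2>0$.

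Next I would collect the spectral information. The operators $L_{1,\pm}$ are standard: on $H^1_\even(\R)$ one has $L_{1,-}\ge0$ with kernel $\R\varphi_\omega$, and $L_{1,+}$ has exactly one negative eigenvalue and trivial kernel. The key operator is $L_{2,+}$: using $\varphi_\omega^2=2\omega\,{\rm sech}^2(\sqrt\omega\,x)$ and the scaling $y=\sqrt\omega\,x$, it becomes $\omega$ times a P\"oschl--Teller operator $-\pd_y^2-\ell(\ell+1)\,{\rm sech}^2 y+1$ with $\ell(\ell+1)=2\gamma/\kappa_1$, whose lowest eigenvalue is $\omega(1-\ell^2)$. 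Hence $L_{2,+}>0$ when $\ell<1$, i.e. $\gamma<\kappa_1$; $L_{2,+}$ has a negative eigenvalue when $\gamma>\kappa_1$; and $\inf\sigma(L_{2,+})=0$ exactly when $\gamma=\kappa_1$ (the degenerate case of the title). I would also record $d(\omega):=S_\omega(\vec\phi_\omega)$, for which $d'(\omega)=Q(\vec\phi_\omega)=2\sqrt\omega/\kappa_1$ and hence $d''(\omega)=(\kappa_1\sqrt\omega)^{-1}>0$.

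For $\gamma<\kappa_1$, $\mathcal L_2$ is positive definite, so $\mathcal L$ has a single (simple) negative eigenvalue coming from $L_{1,+}$, its kernel is the one-dimensional gauge direction $\R\,i\vec\phi_\omega$, and it is positive definite with a gap on the orthogonal complement of these; together with $d''(\omega)>0$ this is exactly the Grillakis--Shatah--Strauss stability configuration, so $e^{i\omega t}\vec\phi_\omega$ is stable. (Equivalently, one can argue directly by a Weinstein/Vakhitov--Kolokolov coercivity estimate: $S_\omega$ is coercive near $\vec\phi_\omega$ on the charge-constraint manifold modulo phase, the lone negative direction of $L_{1,+}$ being absorbed because $d''>0$.) For $\gamma>\kappa_1$ I would exploit the decoupling together with the negative eigenvalue of $L_{2,+}$. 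The linearized evolution in the rotating frame is $\pd_t\vec r=-i\mathcal L\vec r$, and by the block structure it restricts to the second component as $\pd_t(a_2,b_2)=M(a_2,b_2)$ with
\[
M=\begin{pmatrix} 0 & L_{2,-}\\ -L_{2,+} & 0 \end{pmatrix},
\]
so any eigenvalue $\lambda$ of $M$ satisfies $\lambda^2 a_2=-L_{2,-}L_{2,+}a_2$. Since $L_{2,-}>0$, the operator $L_{2,-}L_{2,+}$ is similar to the self-adjoint operator $L_{2,-}^{1/2}L_{2,+}L_{2,-}^{1/2}$, which has a negative eigenvalue by Sylvester's law of inertia (because $L_{2,+}$ does); hence $M$, and therefore $-i\mathcal L$, has a real positive eigenvalue, i.e. $e^{i\omega t}\vec\phi_\omega$ is \emph{linearly} unstable. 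Because the nonlinearity in \eqref{nls} is smooth and this unstable eigenvalue is isolated of finite multiplicity, a standard argument converts linear instability into orbital instability in the sense of Definition~\ref{def1}.

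The main obstacle is the instability half, and specifically making it hold for \emph{every} $\gamma>\kappa_1$: once $L_{2,+}$ has several negative eigenvalues the textbook count $n(\mathcal L)-p(d'')$ need not be odd, so one cannot simply invoke the general Grillakis--Shatah--Strauss instability theorem. The resolution is exactly the block-diagonal structure of $\mathcal L$, which yields a genuine \emph{real} unstable eigenvalue directly from the pair $L_{2,\pm}$; the remaining work is the P\"oschl--Teller spectral analysis (including the borderline $\gamma=\kappa_1$) and the linear-to-nonlinear instability step.
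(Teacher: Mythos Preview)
Your argument is correct. The stability half is essentially the paper's: the Hessian decouples as you say, and coercivity on the orthogonal complement of $\vec\phi_\omega,\,i\vec\phi_\omega$ (your GSS/Weinstein step) is exactly the paper's Proposition~\ref{prop2}, with your P\"oschl--Teller computation playing the role of Lemma~\ref{lem:pos}.

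The instability half differs in route. The paper's \emph{detailed} proof is variational: it exhibits $\vec\psi_\omega=(0,\kappa_1^{-1/2}\varphi_\omega)$ as a negative direction for $S_\omega''(\vec\phi_\omega)$ and checks coercivity on the further orthogonal complement, invoking a Lyapunov-functional instability criterion (Proposition~\ref{prop3}). That argument, however, only covers $\kappa_1<\gamma\le 3\kappa_1$ (the coercivity step uses $\gamma/\kappa_1\le 3$ in Lemma~\ref{lem:pos}(1)); for the full range the paper simply cites the linear-instability method of \cite{CCO1,CCO2}. Your approach is precisely that linear route, made explicit via the block $\begin{pmatrix}0&L_{2,-}\\-L_{2,+}&0\end{pmatrix}$ and the Sylvester argument with $L_{2,-}^{1/2}$, which produces a real unstable eigenvalue for every $\gamma>\kappa_1$. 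So your proof is more uniform for Theorem~\ref{thm1} but relies on the linear-to-nonlinear instability step; the paper's variational method is chosen because it is the one that extends to the degenerate case $\gamma=\kappa_1$ of Theorem~\ref{thm2}, where no linear instability is available.
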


\begin{theorem} \label{thm2}
Let $\kappa_1$, $\kappa_2$, $\gamma$ and $\omega$ be positive constants, 
and let $\gamma=\kappa_1$. 
Then, the semitrivial standing wave solution $e^{i\omega t} \vec \phi_{\omega}(x)$ 
of \eqref{nls} is stable if $\kappa_2<\kappa_1$, 
and unstable if $\kappa_2>\kappa_1$. 
\end{theorem}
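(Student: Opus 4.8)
The plan is to use the variational framework: $e^{i\omega t}\vec\phi_\omega$ is a critical point of the action $S_\omega:=E+\omega Q$, and its nature is governed by the Hessian $L_\omega:=S_\omega''(\vec\phi_\omega)$ on $X$. Writing a perturbation in real and imaginary parts, $\vec h=(a_1+ib_1,a_2+ib_2)$, and using $-\pd_x^2\varphi_\omega+\omega\varphi_\omega=\varphi_\omega^3$, the quadratic form decouples,
$$\dual{L_\omega\vec h}{\vec h}=\dual{L_{1,+}a_1}{a_1}+\dual{L_{1,-}b_1}{b_1}+\dual{L_{2,+}a_2}{a_2}+\dual{L_{2,-}b_2}{b_2},$$
with $L_{1,+}=-\pd_x^2+\omega-3\varphi_\omega^2$, $L_{1,-}=-\pd_x^2+\omega-\varphi_\omega^2$, $L_{2,\pm}=-\pd_x^2+\omega\mp\tfrac{\gamma}{\kappa_1}\varphi_\omega^2$. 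On $H^1_{\even}(\R)$: $L_{1,+}$ has exactly one negative eigenvalue and trivial kernel, $L_{1,-}\ge0$ with $\ker L_{1,-}=\R\varphi_\omega$ (the gauge direction), $L_{2,-}>0$; and when $\gamma=\kappa_1$ one has $L_{2,+}=L_{1,-}$, so $L_{2,+}\ge0$ acquires the \emph{extra} kernel $\R\varphi_\omega$. Hence $L_\omega$ has one negative eigenvalue and kernel $\mathrm{span}\{i\vec\phi_\omega,\vec e\}$, where $\vec e:=(0,\varphi_\omega)$ is flat to second order and not tangent to the gauge orbit — this is the degeneracy. I would also record the facts that make the scalar block stable and compensate the negative direction: $\pd_\omega Q(\vec\phi_\omega)=\pd_\omega(2\sqrt\omega/\kappa_1)>0$ and $\dual{L_{1,+}^{-1}\varphi_\omega}{\varphi_\omega}=-\dual{\pd_\omega\varphi_\omega}{\varphi_\omega}<0$, so that $L_\omega$ is nonnegative on $\{\vec h:\dual{\vec\phi_\omega}{\vec h}=0\}$ modulo its kernel, with the only non-trivial null direction being $\vec e$.

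The decisive ingredient is the behaviour of the conserved quantities along the explicit curve $\vec\psi_\theta:=(\cos\theta\,\phi_1,\ \sin\theta\,\phi_1)$, $\phi_1:=\varphi_\omega/\sqrt{\kappa_1}$, which passes through $\vec\phi_\omega$ at $\theta=0$ tangentially to $\vec e$. When $\gamma=\kappa_1$, using $(\cos^2\theta+\sin^2\theta)^2=1$, one gets the \emph{exact} identities
$$Q(\vec\psi_\theta)=Q(\vec\phi_\omega),\qquad E(\vec\psi_\theta)=E(\vec\phi_\omega)-\frac{\kappa_2-\kappa_1}{4}\,\|\phi_1\|_{L^4}^4\,\sin^4\theta .$$
So $\vec\psi_\theta$ stays on the charge level set of $\vec\phi_\omega$, and $E$ along it strictly decreases if $\kappa_2>\kappa_1$ and strictly increases if $\kappa_2<\kappa_1$. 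I would then carry out a Lyapunov--Schmidt reduction: after fixing the gauge and imposing $Q=Q(\vec\phi_\omega)$, minimizing $E$ over the directions transverse to $\vec e$ (legitimate by the positivity just stated) yields a smooth reduced energy $g(s)$, with $s$ the component along $\vec e$, satisfying $g(0)=E(\vec\phi_\omega)$, $g'(0)=g''(0)=0$. The first genuine task is to show that $g^{(4)}(0)$ has the sign of $\kappa_1-\kappa_2$: the curve $\vec\psi_\theta$ supplies the one-sided bound and the leading constant, while the matching bound requires pushing the reduction to fourth order (the transverse corrector is $O(s^2)$ and produces a definite $O(s^4)$ contribution, which converts the naive coefficient $-\kappa_2$ into $\kappa_1-\kappa_2$).

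For $\kappa_2<\kappa_1$ I would then conclude stability in the usual way: positivity of $L_\omega$ off its kernel together with $g(s)\ge E(\vec\phi_\omega)+c\,s^4$, $c>0$, gives a degenerate coercivity estimate $E(\vec u)-E(\vec\phi_{\omega'})\gtrsim\|\vec w_\perp\|_X^2+s^4$ for $\vec u$ near the orbit, after modulating out the gauge phase and replacing $\omega$ by $\omega'$ with $Q(\vec\phi_{\omega'})=Q(\vec u)$; conservation of $E$ and $Q$ and a bootstrap in a tubular neighbourhood then give orbital stability (only the modulus is weaker, $\delta\sim\eps^2$, which is harmless for Definition~\ref{def1}).

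For $\kappa_2>\kappa_1$, on the symplectic quotient by the gauge action $\vec\phi_\omega$ is a degenerate equilibrium at which $E$ is positive definite except along $\vec e$, where it has a strict fourth-order local maximum; since $L_\omega$ has no negative eigenvalue here, the Grillakis--Shatah--Strauss spectral criterion does not apply, and a degenerate-case instability argument is needed. I would exploit that $\vec e$ generates the planar rotation $B_\theta(u_1,u_2):=(\cos\theta\,u_1-\sin\theta\,u_2,\ \sin\theta\,u_1+\cos\theta\,u_2)$, whose momentum $G(\vec u):=\imP\int_\R u_1\,\overline{u_2}\,dx$ is the variable conjugate to the rotation angle. $G$ is not conserved ($B_\theta$ is a symmetry only when $\kappa_2=\kappa_1$), but $\tfrac{d}{dt}G(\vec u(t))=-\tfrac{d}{d\theta}\big|_{0}E(B_\theta\vec u(t))$, which near the orbit equals $(\kappa_2-\kappa_1)\|\phi_1\|_{L^4}^4\sin^3\theta\cos\theta$ plus a controlled remainder. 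Introducing modulation parameters (gauge $\alpha(t)$, angle $\theta(t)$, frequency $\omega(t)$), with the remainder controlled through the coercivity of $S''$, I would reduce the flow near the orbit to $\dot\theta\approx A\,G$, $\dot G\approx 4c(\kappa_2-\kappa_1)\theta^3$ with $A,c>0$, i.e. an inverted quartic oscillator $\ddot\theta\approx(\text{positive})\,\theta^3$; starting from data $\vec\psi_{\theta_0}$ with $\theta_0\to0$ (for which $E(\vec\psi_{\theta_0})<E(\vec\phi_\omega)$ while $Q(\vec\psi_{\theta_0})=Q(\vec\phi_\omega)$), $\theta(t)$ must leave any fixed small neighbourhood in finite time, contradicting stability. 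The hard part is exactly closing this last step — keeping the modulation remainder subordinate to $\theta^3$ long enough for the ODE comparison to force the escape; this is where an abstract degenerate-instability result, in the spirit of Ohta's work on abstract Hamiltonian systems, is needed.
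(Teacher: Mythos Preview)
Your spectral analysis and the stability half are essentially the paper's argument. The curve $\vec\psi_\theta=(\cos\theta\,\phi_1,\sin\theta\,\phi_1)$ is exactly the paper's $\vec\varphi_\lambda=\vec\phi_\omega+\lambda\vec\psi_\omega+\sigma(\lambda)\vec\phi_\omega$ with $\sin\theta=\lambda$, and the quartic expansion of $E$ on the charge sphere (your reduced $g$) is the content of Lemma~\ref{lem:QS1}; the cancellation that turns the naive coefficient $-\kappa_2$ into $\kappa_1-\kappa_2$ is precisely the identity $S_\omega''(\vec\phi_\omega)\vec\phi_\omega=S_\omega^{(3)}(\vec\phi_\omega)(\vec\psi_\omega,\vec\psi_\omega)$ recorded in \eqref{S2S3}. (The adjustment $\omega\mapsto\omega'$ you propose is unnecessary: the paper works directly on the level set $Q=Q(\vec\phi_\omega)$.)

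The instability half is where your proposal diverges and remains incomplete. Your plan is to modulate by $(\alpha,\theta,\omega)$, compute $\dot\theta,\dot G$, and close an ODE comparison $\ddot\theta\approx c\,\theta^3$; you correctly flag that keeping the remainder below $\theta^3$ over the escape timescale is the hard part, and you defer to ``an abstract degenerate-instability result in the spirit of Ohta''. The paper \emph{is} that argument, carried out concretely, and it avoids the ODE comparison altogether. With $A(\vec u)=(iM(\vec u),\vec\psi_\omega)_H$ (which agrees with your $G$ to leading order near the orbit) and $P=\dual{E'(\vec u)}{q(\vec u)}$ so that $\tfrac{d}{dt}A=P$, the key step is a single pointwise inequality (Proposition~\ref{prop6}):
\[
E(\vec\phi_\omega)\ \le\ E(\vec u)\;-\;\frac{\lambda}{2}\,P(\vec u),
\qquad \lambda=\frac{(M(\vec u),\vec\psi_\omega)_H}{\|\vec\psi_\omega\|_H^2},
\]
valid for all $\vec u$ in a fixed tube with $Q(\vec u)=Q(\vec\phi_\omega)$. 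It is proved by expanding $\lambda P(\vec u)$ to fourth order in parallel with $E$: one finds $\lambda P(\vec u)=4\nu_0\lambda^4+o(\lambda^4+\|\vec w\|_X^2)$ (Lemma~\ref{lem:QS2}), and the same cancellation \eqref{S2S3} kills the $\lambda^2\dual{\cdot}{\vec w}$ cross terms, so that $E-E(\vec\phi_\omega)-\tfrac{\lambda}{2}P\ge -\nu_0\lambda^4+\tfrac{k_0}{2}\|\vec w\|_X^2+o(\cdots)\ge0$ when $\nu_0<0$. Along a solution launched from $\vec\psi_{\theta_0}$, conservation then gives $\tfrac{\lambda}{2}P\ge\delta_\lambda>0$; since $|\lambda|$ is bounded on the tube, $|P|$ has a fixed positive lower bound, $P$ cannot change sign, and $A$ drifts monotonically to $\pm\infty$---contradicting its boundedness. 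No modulation in $\theta$ and no remainder-vs-$\theta^3$ bookkeeping is needed; the inequality above is the missing idea in your sketch.
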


\begin{remark}
By symmetry, 
similar results to Theorems \ref{thm1} and \ref{thm2} also hold 
for semitrivial standing wave solutions of the form
$$e^{i\omega t} \left(0, \frac{1}{\sqrt{\kappa_2}} \varphi_{\omega}(x)\right).$$
\end{remark}

\begin{remark}
For the case $\gamma=\kappa_1=\kappa_2$, 
the system \eqref{nls} has an additional symmetry
\begin{equation*}
\left(\begin{array}{c}
u_1 \\
u_2
\end{array}\right)
\mapsto 
R(\chi)
\left(\begin{array}{c}
u_1 \\
u_2
\end{array}\right), \quad 
R(\chi):=
\left(\begin{array}{cc}
\cos \chi & -\sin \chi \\
\sin \chi & \cos \chi
\end{array}\right)
\end{equation*}
for $\chi\in \R$. 
By this symmetry, 
in the same way as in the proof of Theorem \ref{thm1}, 
we can prove that 
$e^{i\omega t} \vec \phi_{\omega}(x)$ is stable in the following weaker sense. 

For any $\varepsilon>0$ there exists $\delta>0$ with the following property. 
If $u_0\in X$ satisfies $\|\vec u_0-\vec \phi_{\omega}\|_X<\delta$, 
then the solution $\vec u(t)$ of \eqref{nls} with $\vec u(0)=\vec u_0$ satisfies 
$\inf_{\theta, \chi\in \R}
\|\vec u(t)-e^{i\theta} R(\chi) \vec \phi_{\omega}\|_X<\varepsilon$
for all $t\in \R$. 

However, we do not know whether $e^{i\omega t} \vec \phi_{\omega}(x)$ is stable or not 
in the sense of Definition \ref{def1} 
for the case $\gamma=\kappa_1=\kappa_2$. 
\end{remark}

\begin{remark}
The standing waves $e^{i\omega t} \vec \phi_{\omega}(x)$ are also solutions of the following system 
\begin{equation} \label{nls-C}
\begin{cases}
i\pd_t u_1=-\pd_x^2 u_1-\kappa_1 |u_1|^2 u_1-\gamma\, |u_2|^2 u_1, \\
i\pd_t u_2=-\pd_x^2 u_2-\kappa_2 |u_2|^2 u_2-\gamma\, |u_1|^2 u_2. 
\end{cases}
\end{equation}
It is known that 
for any positive constants $\kappa_1$, $\kappa_2$, $\gamma$ and $\omega$, 
the standing wave solution $e^{i\omega t} \vec \phi_{\omega}(x)$ is stable for \eqref{nls-C} 
(see \cite{oht96, NW}). 
\end{remark}

\begin{remark}
For related results on systems of nonlinear Sch\"odinger equations with quadratic interactions, 
see \cite{CO3, HOT}. 
While, for related studies on degenerate cases, 
see \cite{maeda, yamazaki}. 
\end{remark}

The rest of the paper is organized as follows. 
In section \ref{sect2}, we consider the non-degenerate case $\gamma \ne \kappa_1$. 
The stability part of Theorem \ref{thm1} is proved by the standard argument based on \cite{GSS1,wei2}. 
The degenerate case $\gamma=\kappa_1$ is studied in section \ref{sect3}. 
The instability part of Theorem \ref{thm2} is proved by using similar arguments to those in \cite{CO3, oht11}. 

\section{Proof of Theorem \ref{thm1}}\label{sect2}

We regard $L^2(\R,\C)$ as a real Hilbert space with the inner product 
$$(u,v)_{L^2}=\reP \int_{\R}u(x)\overline{v(x)}\,dx,$$
and define the inner products of real Hilbert spaces 
$H=L^2_{\even}(\R,\C)^2$ and $X=H^1_{\even} (\R,\C)^2$ by 
$$(\vec u,\vec v)_{H}=(u_1,v_1)_{L^2}+(u_2,v_2)_{L^2}, \quad 
(\vec u,\vec v)_{X}=(\vec u,\vec v)_{H}+(\pd_x \vec u,\pd_x \vec v)_{H}.$$

For $\omega>0$, we define 
$S_{\omega}(\vec v)=E(\vec v)+\omega Q(\vec v)$ for $\vec v\in X$. 
Then, we have $S_{\omega}'(\vec \phi_{\omega})=0$. 
Moreover, for $a\in \R$, we define $L_a$ by 
$$L_a u=-\pd_x^2 u+\omega u-a \varphi_{\omega}(x)^2 u$$
for $u\in H^1_{\even}(\R,\R)$. 
Then, for $\vec v=(v_1,v_2)\in X$, we have 
\begin{align}
\dual{S_{\omega}''(\vec \phi_{\omega}) \vec v}{\vec v}
&=\dual{L_3 \reP v_1}{\reP v_1}+\dual{L_1 \imP v_1}{\imP v_1} \label{QF1} \\
&\hspace{5mm} 
+\dual{L_{\gamma/\kappa_1} \reP v_2}{\reP v_2}
+\dual{L_{-\gamma/\kappa_1} \imP v_2}{\imP v_2}. \nonumber 
\end{align}
We recall some known results on $L_a$ (see \cite{wei1}). 

\begin{lemma}\label{lem:pos}
\noindent ${\bf ({\rm 1})}$ \hspace{1mm}
If $1\le a\le 3$, then there exists $C>0$ such that 
$\dual{L_a v}{v}\ge C \|v\|_{H^1}^2$ 
for all $v\in H^1_{\even}(\R,\R)$ satisfying $(v,\varphi_{\omega})_{L^2}=0$. 

\noindent ${\bf ({\rm 2})}$ \hspace{1mm}
If $a<1$, then there exists $C>0$ such that $\dual{L_a v}{v}\ge C \|v\|_{H^1}^2$ 
for all $v\in H^1_{\even}(\R,\R)$. 

\noindent ${\bf ({\rm 3})}$ \hspace{1mm}
$L_1 \varphi_{\omega}=0$. 
If $a>1$, then $\dual{L_a \varphi_{\omega}}{\varphi_{\omega}}<0$. 
\end{lemma}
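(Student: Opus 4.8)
These are the standard spectral facts for the one‑dimensional P\"oschl--Teller operator $L_a$, and I would establish them as follows. Item (3) is immediate: since $\varphi_\omega$ solves $-\pd_x^2\varphi_\omega+\omega\varphi_\omega-\varphi_\omega^3=0$ we have $L_1\varphi_\omega=-\pd_x^2\varphi_\omega+\omega\varphi_\omega-\varphi_\omega^2\cdot\varphi_\omega=0$, hence for $a>1$
\[
\dual{L_a\varphi_\omega}{\varphi_\omega}=\dual{L_1\varphi_\omega}{\varphi_\omega}-(a-1)\|\varphi_\omega\|_{L^4}^4=-(a-1)\|\varphi_\omega\|_{L^4}^4<0 .
\]
For item (2), write $L_a=L_1+(1-a)\varphi_\omega^2$. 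If $a\le 0$ the added term is nonnegative and $\dual{L_a v}{v}\ge\min\{1,\omega\}\|v\|_{H^1}^2$ at once. If $0<a<1$, the bottom $\mu_0(a)$ of $\sigma(L_a)$ is a simple eigenvalue with a positive eigenfunction; since $\mu_0(1)=0$ by (3) and $a\mapsto\mu_0(a)$ is strictly decreasing, $\mu_0(a)>0$, so $\dual{L_a v}{v}\ge\mu_0(a)\|v\|_{L^2}^2$ for all $v\in H^1(\R,\R)$. This $L^2$ bound is promoted to the claimed $H^1$ bound by the usual device of combining it with $\|\pd_x v\|_{L^2}^2=\dual{L_a v}{v}-\omega\|v\|_{L^2}^2+a\int\varphi_\omega^2 v^2\le(1+a\|\varphi_\omega\|_{L^\infty}^2\mu_0(a)^{-1})\dual{L_a v}{v}$.

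Item (1) carries the real content. For $a=1$ it is short: $L_1\ge 0$, its kernel in $H^1_{\even}(\R,\R)$ is exactly $\R\varphi_\omega$ (the simple ground state; the other zero mode $\varphi_\omega'$ is odd, so it lies in $\ker L_3$ but not in $\ker L_1$), and $\sigma(L_1)=\{0\}\cup[\omega,\infty)$; hence $\dual{L_1 v}{v}\ge\omega\|v\|_{L^2}^2$ whenever $(v,\varphi_\omega)_{L^2}=0$, which upgrades to $H^1$ as in (2). For $1<a\le 3$ I would use the known spectral picture of $L_a$ on $H^1_{\even}(\R,\R)$ (see \cite{wei1}): exactly one negative eigenvalue $-\lambda_a<0$, simple, with a positive eigenfunction $\chi_a$; trivial kernel; and the rest of $\sigma(L_a)$ inside $\sigma_{\mathrm{ess}}(L_a)=[\omega,\infty)$. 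The classical coercivity criterion then reduces the statement to the single inequality $(L_a^{-1}\varphi_\omega,\varphi_\omega)_{L^2}<0$, with $L_a^{-1}$ taken on the invertible even subspace. At the endpoint this comes from the profile equation: differentiating $-\pd_x^2\varphi_\omega+\omega\varphi_\omega-\varphi_\omega^3=0$ in $\omega$ gives $L_3\,\pd_\omega\varphi_\omega=-\varphi_\omega$, so $(L_3^{-1}\varphi_\omega,\varphi_\omega)_{L^2}=-\tfrac12\pd_\omega\|\varphi_\omega\|_{L^2}^2=-\tfrac12\pd_\omega(4\sqrt{\omega})=-\omega^{-1/2}<0$; and for $1<a<3$ it follows from $\tfrac{d}{da}(L_a^{-1}\varphi_\omega,\varphi_\omega)_{L^2}=\int\varphi_\omega^2(L_a^{-1}\varphi_\omega)^2\,dx\ge 0$ together with $(L_a^{-1}\varphi_\omega,\varphi_\omega)_{L^2}\to-\infty$ as $a\downarrow 1$ (the term $(\varphi_\omega,\chi_a)_{L^2}^2/(-\lambda_a)$ in its spectral expansion blows up, since $\lambda_a\downarrow 0$ while $(\varphi_\omega,\chi_a)_{L^2}$ stays bounded away from $0$).

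The main obstacle is the coercivity criterion just invoked: turning ``one negative eigenvalue plus $(L_a^{-1}\varphi_\omega,\varphi_\omega)_{L^2}<0$'' into a genuine lower bound on the constraint space $\{v\in H^1_{\even}:(v,\varphi_\omega)_{L^2}=0\}$. Setting $d(a):=\inf\{\dual{L_a v}{v}:v\in H^1_{\even},\ \|v\|_{L^2}=1,\ (v,\varphi_\omega)_{L^2}=0\}$, suppose for contradiction that $d(a)\le 0$. Then the infimum is attained (otherwise $d(a)=\inf\sigma_{\mathrm{ess}}(L_a)=\omega>0$), say at $v_*$, and Lagrange multipliers give $L_a v_*=d(a)v_*+\nu\varphi_\omega$ with $\nu\ne 0$ (else $v_*$ would be a negative eigenfunction of $L_a$, hence a multiple of $\chi_a$, contradicting $(\chi_a,\varphi_\omega)_{L^2}\ne 0$). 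Solvability of this identity forces $d(a)\ne-\lambda_a$, so $d(a)\in(-\lambda_a,0]$ and $L_a-d(a)$ is invertible; then $v_*=\nu(L_a-d(a))^{-1}\varphi_\omega$, and the constraint gives $((L_a-d(a))^{-1}\varphi_\omega,\varphi_\omega)_{L^2}=0$. But $t\mapsto((L_a-t)^{-1}\varphi_\omega,\varphi_\omega)_{L^2}$ is strictly increasing on $(-\lambda_a,\omega)$ and on $(-\lambda_a,0]$ runs from $-\infty$ up to $(L_a^{-1}\varphi_\omega,\varphi_\omega)_{L^2}<0$, so it never vanishes there --- a contradiction. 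Hence $d(a)>0$, and the $L^2$ bound upgrades to the $H^1$ bound as in (2). Since all of this is classical (\cite{wei1,wei2,GSS1}), in the actual write-up it should suffice to quote it and to carry out in detail only the endpoint computation $(L_3^{-1}\varphi_\omega,\varphi_\omega)_{L^2}<0$ and the monotonicity $\tfrac{d}{da}(L_a^{-1}\varphi_\omega,\varphi_\omega)_{L^2}\ge 0$.
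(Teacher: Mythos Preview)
Your argument is correct. The paper itself gives no proof of this lemma: it is stated as a known fact with the citation ``(see \cite{wei1})'', so there is nothing in the paper to compare your approach against. What you have supplied is precisely the classical proof behind that citation. Parts (2) and (3) are routine; for part (1) you correctly read off the spectrum of the P\"oschl--Teller operator $L_a$ on $H^1_{\even}(\R,\R)$ for $1<a\le 3$ (one simple negative eigenvalue with positive eigenfunction, trivial kernel, $\sigma_{\mathrm{ess}}=[\omega,\infty)$), and then reduce coercivity on $\{\varphi_\omega\}^\perp$ to the single sign condition $(L_a^{-1}\varphi_\omega,\varphi_\omega)_{L^2}<0$ via the standard Lagrange-multiplier/resolvent argument. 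Your verification of that sign --- exact at $a=3$ from $L_3\,\pd_\omega\varphi_\omega=-\varphi_\omega$ together with $\pd_\omega\|\varphi_\omega\|_{L^2}^2>0$, then propagated to $(1,3)$ by the monotonicity $\tfrac{d}{da}(L_a^{-1}\varphi_\omega,\varphi_\omega)_{L^2}\ge 0$ --- is clean; note that the limit $a\downarrow 1$ is in fact superfluous once you have monotonicity and the negative endpoint value at $a=3$. Two cosmetic points: in your parenthetical ``(otherwise $d(a)=\inf\sigma_{\mathrm{ess}}(L_a)$)'' the correct statement is $d(a)\ge\inf\sigma_{\mathrm{ess}}(L_a)$, which is all you need; and the aside about $\varphi_\omega'$ being a zero mode of $L_3$ is correct but tangential to the $a=1$ case you are discussing there. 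As you yourself observe at the end, in the paper's style a bare citation suffices, and indeed that is exactly what the paper does.
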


To prove the stability part of Theorem \ref{thm1}, 
we use the following proposition (see \cite{GSS1,wei2}). 

\begin{proposition}\label{prop2}
Assume that there exists a constant $C>0$ such that \par \noindent 
$\dual{S_{\omega}''(\vec \phi_{\omega})\vec w}{\vec w}\ge C \|\vec w\|_{X}^2$ 
for all $\vec w\in X$ satisfying 
\begin{equation} \label{OC1}
(\vec w, \vec \phi_{\omega})_{H}=(\vec w, i\vec \phi_{\omega})_{H}=0.
\end{equation}
Then, the standing wave solution $e^{i\omega t} \vec \phi_{\omega}$ of \eqref{nls} is stable. 
\end{proposition}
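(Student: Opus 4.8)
The plan is to use $S_\omega$ as a conserved Lyapunov functional. Since $E$ and $Q$ are conserved and $S_\omega=E+\omega Q$, the map $t\mapsto S_\omega(\vec u(t))$ is constant along solutions of \eqref{nls}, while $\vec\phi_\omega$ is a critical point, $S_\omega'(\vec\phi_\omega)=0$. Together with the assumed coercivity of $\dual{S_\omega''(\vec\phi_\omega)\cdot}{\cdot}$ on the codimension-two subspace cut out by \eqref{OC1}, this should confine any solution starting near $\vec\phi_\omega$ to a small tube around the orbit $\{e^{i\theta}\vec\phi_\omega:\theta\in\R\}$ for all time. I would argue by contradiction: if $e^{i\omega t}\vec\phi_\omega$ were unstable there would be $\varepsilon_0>0$, initial data $\vec u_{0,n}\to\vec\phi_\omega$ in $X$, and times at which the solution $\vec u_n\in C(\R,X)$ leaves the $\varepsilon_0$-neighborhood of the orbit. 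Since $t\mapsto\inf_{\theta}\|\vec u_n(t)-e^{i\theta}\vec\phi_\omega\|_X$ is continuous (and the infimum is attained, being $2\pi$-periodic in $\theta$) and starts below $\varepsilon_0$ for $n$ large, a first-exit-time argument produces $t_n$ with $\inf_{\theta}\|\vec u_n(t_n)-e^{i\theta}\vec\phi_\omega\|_X=\eta$, where $\eta:=\min\{\varepsilon_0,\varepsilon_*\}$ and $\varepsilon_*>0$ is a small threshold fixed later; in particular $\vec u_n(t_n)$ is close to the orbit.

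Next I would modulate and decompose. Using the gauge invariance $S_\omega(e^{i\theta}\vec v)=S_\omega(\vec v)$, $Q(e^{i\theta}\vec v)=Q(\vec v)$ and the implicit function theorem near $\vec\phi_\omega$ (the relevant Jacobian is $(-i\vec\phi_\omega,i\vec\phi_\omega)_H=-\|\vec\phi_\omega\|_H^2\neq0$), I choose $\theta_n$ so that $\vec v_n:=e^{-i\theta_n}\vec u_n(t_n)-\vec\phi_\omega\in X$ satisfies $(\vec v_n,i\vec\phi_\omega)_H=0$ and $\eta\le\|\vec v_n\|_X\le C\eta$. Writing $\vec v_n=\vec w_n+c_n\vec\phi_\omega$ with $c_n:=(\vec v_n,\vec\phi_\omega)_H/\|\vec\phi_\omega\|_H^2$, the piece $\vec w_n$ is orthogonal to $\vec\phi_\omega$ in $H$, and since $(\vec\phi_\omega,i\vec\phi_\omega)_H=0$ it also satisfies $(\vec w_n,i\vec\phi_\omega)_H=0$, so $\vec w_n$ is admissible in the coercivity hypothesis. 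The key point is that $c_n$ is quadratically small: expanding the quadratic functional $Q$ gives $Q(\vec u_n(t_n))=Q(\vec\phi_\omega+\vec v_n)=Q(\vec\phi_\omega)+(\vec v_n,\vec\phi_\omega)_H+\tfrac12\|\vec v_n\|_H^2$, and since $Q(\vec u_n(t_n))=Q(\vec u_{0,n})\to Q(\vec\phi_\omega)$ we get $(\vec v_n,\vec\phi_\omega)_H=-\tfrac12\|\vec v_n\|_H^2+o(1)$, whence $|c_n|\le C\eta^2$ for $n$ large, and consequently $\tfrac12\eta\le\|\vec w_n\|_X\le C\eta$.

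Then I would run the energy comparison. Because $S_\omega$ is the sum of a quadratic and a quartic form, Taylor expansion at $\vec\phi_\omega$ together with $S_\omega'(\vec\phi_\omega)=0$ yields $S_\omega(\vec u_n(t_n))=S_\omega(\vec\phi_\omega+\vec v_n)=S_\omega(\vec\phi_\omega)+\tfrac12\dual{S_\omega''(\vec\phi_\omega)\vec v_n}{\vec v_n}+R_n$ with $|R_n|\le C\|\vec v_n\|_X^3\le C\eta^3$ (using the Sobolev embedding $H^1(\R)\hookrightarrow L^\infty(\R)$ to control the cubic and quartic remainders); since the left-hand side equals $S_\omega(\vec u_{0,n})\to S_\omega(\vec\phi_\omega)$, this forces $\dual{S_\omega''(\vec\phi_\omega)\vec v_n}{\vec v_n}\le o(1)+C\eta^3$. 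On the other hand, expanding the bounded symmetric bilinear form $S_\omega''(\vec\phi_\omega)$ along $\vec v_n=\vec w_n+c_n\vec\phi_\omega$, the coercivity estimate $\dual{S_\omega''(\vec\phi_\omega)\vec w_n}{\vec w_n}\ge C\|\vec w_n\|_X^2\ge\tfrac{C}{4}\eta^2$ together with $|c_n|\le C\eta^2$ makes the cross term $2c_n\dual{S_\omega''(\vec\phi_\omega)\vec w_n}{\vec\phi_\omega}$ and the diagonal term $c_n^2\dual{S_\omega''(\vec\phi_\omega)\vec\phi_\omega}{\vec\phi_\omega}$ of order $\eta^3$, so $\dual{S_\omega''(\vec\phi_\omega)\vec v_n}{\vec v_n}\ge\tfrac{C}{4}\eta^2-C\eta^3$. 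Choosing $\varepsilon_*$ (hence $\eta$) small enough to absorb the cubic terms into the coercive term, the two inequalities combine to $c\,\eta^2\le o(1)$ with $\eta>0$ fixed, a contradiction as $n\to\infty$; therefore $e^{i\omega t}\vec\phi_\omega$ is stable.

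I expect the main obstacle to be the modulation and decomposition bookkeeping: arranging that $\vec w_n$ satisfies both constraints in \eqref{OC1} at once — the phase constraint $(\vec w_n,i\vec\phi_\omega)_H=0$ via the choice of $\theta_n$, the charge constraint $(\vec w_n,\vec\phi_\omega)_H=0$ via projecting off $\vec\phi_\omega$ — while keeping $\|\vec w_n\|_X$ comparable to $\eta$, and, crucially, extracting the quadratic smallness $|c_n|=O(\eta^2)$ from conservation of charge, which is precisely what renders every error term genuinely lower order than the coercive contribution $\sim\eta^2$. The continuity/first-exit-time argument and the Taylor expansion of $S_\omega$ are routine given the polynomial structure of $E$ and the one-dimensional Sobolev embedding.
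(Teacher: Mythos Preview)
Your argument is correct and is precisely the standard Lyapunov-functional/modulation proof of Grillakis--Shatah--Strauss and Weinstein. The paper does not supply its own proof of this proposition; it simply states the result and cites \cite{GSS1,wei2}, so there is nothing further to compare (the modulation map you construct by the implicit function theorem is the same as the paper's Lemma~\ref{lem:ift}, and the charge identity you use to get $|c_n|=O(\eta^2)$ is the same device as in Lemma~\ref{lem:Q1}).
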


\begin{proof}[Proof of Theorem \ref{thm1} (Stability part)]
Assume that $\gamma<\kappa_1$. 

By Lemma \ref{lem:pos} (1), 
there exists $C_1>0$ such that 
$$\dual{L_3 \reP w_1}{\reP w_1}+\dual{L_1 \imP w_1}{\imP w_1}
\ge C_1 \|w_1\|_{H^1}^2$$
for all $w_1\in H^1(\R,\C)$ satisfying 
\begin{equation} \label{OC2}
(\reP w_1,\varphi_{\omega})_{L^2}=(\imP w_1, \varphi_{\omega})_{L^2}=0.
\end{equation} 
Note that since $\vec \phi_{\omega}$ has the form \eqref{ST1}, 
the condition \eqref{OC2} is equivalent to \eqref{OC1}. 
Moreover, by the assumption $0<\gamma<\kappa_1$, we have 
$-\gamma/\kappa_1<\gamma/\kappa_1<1$. 
Thus, by Lemma \ref{lem:pos} (2), 
there exists $C_2>0$ such that 
$$\dual{L_{\gamma/\kappa_1} \reP w_2}{\reP w_2}
+\dual{L_{-\gamma/\kappa_1} \imP w_2}{\imP w_2}
\ge C_2 \|w_2\|_{H^1}^2$$
for all $w_2\in H^1_{\even} (\R,\C)$. 

Thus, putting $C_3=\min \{C_1,C_2\}$, we have 
$\dual{S_{\omega}''(\vec \phi_{\omega})\vec w}{\vec w}\ge C_3 \|\vec w\|_{X}^2$ 
for all $\vec w\in X$ satisfying \eqref{OC1}. 
Hence, the stability part of Theorem \ref{thm1} follows from Proposition \ref{prop2}. 
\end{proof}

Next, we consider the instability part of Theorem \ref{thm1}. 
The instability of $e^{i\omega t}\vec \phi_{\omega}$ 
can be proved for all $\gamma\in (\kappa_1,\infty)$ 
in the same way as in \cite{CCO1,CCO2} using the linear instability argument. 
On the other hand, by the Lyapunov function method, 
the instability of $e^{i\omega t}\vec \phi_{\omega}$ is proved 
for a restricted case $\gamma\in (\kappa_1,3\kappa_1]$. 
Since our main interest in this paper is to consider the borderline case $\gamma=\kappa_1$ 
in Theorem \ref{thm2}, 
and since the instability result in Theorem \ref{thm2} is proved by the Lyapunov function method 
but not by the linear instability argument, 
we here give the proof of instability for the case $\gamma\in (\kappa_1,3\kappa_1]$. 
To prove the instability of $e^{i\omega t}\vec \phi_{\omega}$ for this case, 
we use the following proposition (see \cite{oht11}). 

\begin{proposition}\label{prop3}
Assume that there exist $\vec \psi\in X$ 
and a constant $C>0$ such that 
$$(\vec \psi,\vec \phi_{\omega})_{H}=(\vec \psi,i\vec \phi_{\omega})_{H}=0, \quad 
\dual{S_{\omega}''(\vec \phi_{\omega})\vec \psi}{\vec \psi}<0,$$ 
and 
$\dual{S_{\omega}''(\vec \phi_{\omega})\vec w}{\vec w}\ge C \|\vec w\|_{X}^2$ 
for all $\vec w\in  X$ satisfying 
\begin{equation} \label{OC3}
(\vec w, \vec \phi_{\omega})_{H}=(\vec w, i \vec \phi_{\omega})_{H}
=(\vec w, \vec \psi)_{H}=0.
\end{equation} 
Then, the standing wave solution $e^{i\omega t}\vec \phi_{\omega}$ of \eqref{nls} is unstable. 
\end{proposition}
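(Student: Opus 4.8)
The plan is to run the ``Lyapunov function'' scheme of Grillakis--Shatah--Strauss \cite{GSS1}, in the form used by Ohta \cite{oht11}: supposing $e^{i\omega t}\vec\phi_\omega$ were stable, I build solutions starting arbitrarily close to $\vec\phi_\omega$ along which a functional that is bounded on a tube about the orbit grows without bound, which is impossible. First I would replace the given $\vec\psi$ by a more convenient unstable direction. By the hypotheses the quadratic form $\dual{S_\omega''(\vec\phi_\omega)\vec v}{\vec v}$ restricted to $W:=\{\vec\phi_\omega,i\vec\phi_\omega\}^{\perp_H}$ is negative on $\vec\psi$ and coercive on a subspace of codimension one, so (standard spectral theory, using that the essential spectrum of $S_\omega''(\vec\phi_\omega)$ is positive) it has exactly one negative eigenvalue on $W$, with unit eigenvector $\vec d$; since $S_\omega''(\vec\phi_\omega)(i\vec\phi_\omega)=0$ one gets $S_\omega''(\vec\phi_\omega)\vec d=-\mu\vec d+\alpha\vec\phi_\omega$ for some $\mu>0$, $\alpha\in\R$, so that $(\vec d,\vec\phi_\omega)_H=(\vec d,i\vec\phi_\omega)_H=0$, $\dual{S_\omega''(\vec\phi_\omega)\vec d}{\vec d}=-\mu$, $\dual{S_\omega''(\vec\phi_\omega)\vec z}{\vec d}=0$ for all $\vec z\perp_H\{\vec\phi_\omega,\vec d\}$, and $S_\omega''(\vec\phi_\omega)$ is coercive on $\{\vec\phi_\omega,i\vec\phi_\omega,\vec d\}^{\perp_H}$.

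Next, for small $s>0$ I would choose, by the implicit function theorem, $\rho(s)=O(s^2)$ so that $\vec u_0^{\,s}:=\vec\phi_\omega+s\vec d+\rho(s)\vec\phi_\omega$ satisfies $Q(\vec u_0^{\,s})=Q(\vec\phi_\omega)$; then $S_\omega'(\vec\phi_\omega)=0$ and Taylor expansion give $S_\omega(\vec u_0^{\,s})=S_\omega(\vec\phi_\omega)-\tfrac{\mu}{2}s^2+O(s^3)<S_\omega(\vec\phi_\omega)$ and $\|\vec u_0^{\,s}-\vec\phi_\omega\|_X=O(s)$. I would also fix the standard tubular neighbourhood: there are $\eps_0>0$ and a $C^1$ map $\theta$ on $\mathcal U_{\eps_0}:=\{\vec u:\inf_{\sigma}\|\vec u-e^{i\sigma}\vec\phi_\omega\|_X<\eps_0\}$ with $\vec h(\vec u):=e^{-i\theta(\vec u)}\vec u-\vec\phi_\omega$ perpendicular in $H$ to $i\vec\phi_\omega$; $\vec h$ is gauge-invariant, $\|\vec h(\vec u)\|_X$ is comparable to the distance of $\vec u$ to the orbit, and along solutions $\dot\theta=\omega+O(\|\vec h\|_X)$. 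Decompose $\vec h=a\vec\phi_\omega+b\vec d+\vec z$ with $\vec z\perp_H\{\vec\phi_\omega,i\vec\phi_\omega,\vec d\}$. Conservation and expansion of $Q$ give $a=O(\|\vec h\|_X^2)$; conservation and expansion of $S_\omega$, the identity $\dual{S_\omega''(\vec\phi_\omega)\vec z}{\vec d}=0$, and the coercivity give, on $\{\vec u\in\mathcal U_{\eps_0}:Q(\vec u)=Q(\vec\phi_\omega),\,S_\omega(\vec u)\le S_\omega(\vec\phi_\omega)\}$ with $\eps_0$ small,
\begin{equation*}
0<S_\omega(\vec\phi_\omega)-S_\omega(\vec u)=\tfrac{\mu}{2}b^2-\tfrac12\dual{S_\omega''(\vec\phi_\omega)\vec z}{\vec z}+O(\|\vec h\|_X^3)\le C_1 b^2-C_2\|\vec z\|_X^2,
\end{equation*}
whence $\|\vec z\|_X\lesssim|b|$ and $b^2\gtrsim S_\omega(\vec\phi_\omega)-S_\omega(\vec u)$ there.

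Now let $\vec u(t)$ solve \eqref{nls} with $\vec u(0)=\vec u_0^{\,s}$, and suppose, aiming at a contradiction with stability, that $\vec u(t)\in\mathcal U_{\eps_0}$ for all $t\ge0$. By the previous step $b(t)^2\gtrsim S_\omega(\vec\phi_\omega)-S_\omega(\vec u_0^{\,s})=\tfrac{\mu}{2}s^2+O(s^3)>0$, so $|b(t)|\ge b_\ast(s)>0$; as $b(\cdot)$ is continuous it keeps a fixed sign, and $b(0)\approx s>0$ forces $b(t)\ge b_\ast(s)>0$ for all $t\ge0$. Put $P(\vec u):=(\vec h(\vec u),i\vec d)_H$, which is gauge-invariant, $C^1$, and satisfies $|P|\le C_3\eps_0$ on $\mathcal U_{\eps_0}$. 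Differentiating along $i\pd_t\vec u=E'(\vec u)=S_\omega'(\vec u)-\omega\vec u$, using the gauge covariance of $E'$, $S_\omega'(\vec\phi_\omega)=0$, $E''(\vec\phi_\omega)=S_\omega''(\vec\phi_\omega)-\omega$, the bounds $a=O(\|\vec h\|_X^2)$ and $\dot\theta=\omega+O(\|\vec h\|_X)$ together with $(\vec d,\vec\phi_\omega)_H=0$ (which makes the $(\dot\theta-\omega)$-term second order), and the cancellation $\dual{S_\omega''(\vec\phi_\omega)\vec z}{\vec d}=0$, one obtains
\begin{equation*}
\frac{d}{dt}P(\vec u(t))=-b(t)\dual{S_\omega''(\vec\phi_\omega)\vec d}{\vec d}+O(\|\vec h(t)\|_X^2)=\mu\,b(t)+O(\|\vec h(t)\|_X^2).
\end{equation*}
Shrinking $\eps_0$ (depending on $s$) so that the remainder is $\le\tfrac12\mu b_\ast(s)$, this is $\ge\tfrac12\mu b_\ast(s)>0$ for all $t\ge0$, so $P(\vec u(t))\to+\infty$, contradicting $|P|\le C_3\eps_0$. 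Hence $\vec u(t)$ must leave $\mathcal U_{\eps_0}$ at some finite time; since $\|\vec u_0^{\,s}-\vec\phi_\omega\|_X\to0$ as $s\to0$, this contradicts stability, and $e^{i\omega t}\vec\phi_\omega$ is unstable.

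The main obstacle is controlling the sign of $\tfrac{d}{dt}P$: this is exactly what forces the preliminary replacement of $\vec\psi$ by $\vec d$, since one needs \emph{simultaneously} $(\vec d,\vec\phi_\omega)_H=0$ and $S_\omega''(\vec\phi_\omega)\vec d\in\mathrm{span}\{\vec\phi_\omega,\vec d\}$ in order to kill all first-order error terms (those from $\dot\theta-\omega$ and those from the $\vec z$-component) in both the coercivity estimate and in $\tfrac{d}{dt}P$; without both, first-order remainders of uncontrolled sign survive. Everything else — the two implicit function theorem constructions and the bookkeeping of the $O(\|\vec h\|_X^2)$ and $O(\|\vec h\|_X^3)$ remainders, which uses $a=O(\|\vec h\|_X^2)$ and $\|\vec z\|_X\lesssim|b|$ — is routine.
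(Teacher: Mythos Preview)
The paper does not actually prove Proposition~\ref{prop3}; it only cites \cite{oht11}. Your sketch is a faithful reconstruction of that scheme (in the lineage of \cite{GSS1}): pass from $\vec\psi$ to a constrained eigenvector $\vec d$ so that $S_\omega''(\vec\phi_\omega)\vec d\in\mathrm{span}\{\vec d,\vec\phi_\omega\}$, build charge-preserving data along $\vec d$ with $S_\omega<S_\omega(\vec\phi_\omega)$, modulate, decompose $\vec h=a\vec\phi_\omega+b\vec d+\vec z$, obtain the lower bound $|b(t)|\ge b_\ast(s)>0$, and show the bounded functional $P(\vec u)=(\vec h(\vec u),i\vec d)_H$ drifts to infinity. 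This is the right argument, and your insistence on replacing $\vec\psi$ by $\vec d$ (so that $\dual{S_\omega''(\vec\phi_\omega)\vec z}{\vec d}=0$) is exactly the point that makes the first-order errors cancel.

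There is, however, one slip that as written invalidates the conclusion. You say ``Shrinking $\eps_0$ (depending on $s$) so that the remainder is $\le\tfrac12\mu b_\ast(s)$''. If $\eps_0$ is allowed to depend on $s$ and tends to $0$ as $s\to0$, then showing that $\vec u(t)$ leaves $\mathcal U_{\eps_0(s)}$ does \emph{not} contradict stability: you need a \emph{single} $\eps_0>0$ such that for every $\delta>0$ some solution starting within $\delta$ exits $\mathcal U_{\eps_0}$. The repair is immediate and already contained in your own estimates: on $\{Q=Q(\vec\phi_\omega),\ S_\omega\le S_\omega(\vec\phi_\omega)\}\cap\mathcal U_{\eps_0}$ you established $a=O(\|\vec h\|_X^2)$ and $\|\vec z\|_X\lesssim|b|$, hence $\|\vec h\|_X\sim|b|$, so the remainder in $\tfrac{d}{dt}P$ is $O(\|\vec h\|_X^2)=O(b^2)=|b|\cdot O(\eps_0)$. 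Fix $\eps_0$ once and for all so that this $O(\eps_0)$ is $\le\tfrac{\mu}{2}$; then $\tfrac{d}{dt}P\ge\tfrac{\mu}{2}b(t)\ge\tfrac{\mu}{2}b_\ast(s)>0$ for all $t\ge0$ and every small $s>0$, and the contradiction with $|P|\le C_3\eps_0$ now genuinely yields instability.
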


\begin{proof}[Proof of Theorem \ref{thm1} (Instability part for the case 
$\kappa_1<\gamma\le 3\kappa_1$)] \hspace{1mm}

Assume that $\gamma\in (\kappa_1,3\kappa_1]$. 
We take
\begin{equation*}
\vec \psi_{\omega}=\left(0,\frac{1}{\sqrt{\kappa_1}} \varphi_{\omega}\right). 
\end{equation*}
Then, $\vec \psi_{\omega} \in X$ and $(\vec \psi_{\omega},\vec \phi_{\omega})_{H}
=(\vec \psi_{\omega},i\vec \phi_{\omega})_{H}=0$. 

Since $1<\gamma/\kappa_1\le 3$, 
by Lemma \ref{lem:pos} (3), we have 
$$\dual{S_{\omega}''(\vec \phi_{\omega})\vec \psi_{\omega}}{\vec \psi_{\omega}}
=\frac{1}{\kappa_1} \dual{L_{\gamma/\kappa_1} \varphi_{\omega}}{\varphi_{\omega}}<0.$$
Moreover, since the condition \eqref{OC3} is equivalent to 
\begin{equation*}
(\reP w_1,\varphi_{\omega})_{L^2}=(\imP w_1, \varphi_{\omega})_{L^2}
=(\reP w_2, \varphi_{\omega})_{L^2}=0, 
\end{equation*} 
by Lemma \ref{lem:pos} (1) and (2), 
there exists a constant $C>0$ such that \par \noindent 
$\dual{S_{\omega}''(\vec \phi_{\omega})\vec w}{\vec w}\ge C \|\vec w\|_{X}^2$ 
for all $\vec w\in  X$ satisfying \eqref{OC3}. 

Hence, the instability of $e^{i\omega t}\vec \phi_{\omega}$ follows from 
Proposition \ref{prop3}. 
\end{proof}

\section{Proof of Theorem \ref{thm2}} \label{sect3} 

In this section, we consider the case $\gamma=\kappa_1$. 
By \eqref{QF1}, we have 
\begin{align} \label{QF2}
\dual{S_{\omega}''(\vec \phi_{\omega}) \vec v}{\vec v}
&=\dual{L_3 \reP v_1}{\reP v_1}+\dual{L_1 \imP v_1}{\imP v_1} \\ 
&\hspace{5mm} 
+\dual{L_1 \reP v_2}{\reP v_2}
+\dual{L_{-1} \imP v_2}{\imP v_2} 
\nonumber
\end{align}
for $\vec v=(v_1,v_2)\in X$. 
Recall that 
\begin{equation*}
\vec \phi_{\omega}=\left(\frac{1}{\sqrt{\kappa_1}} \varphi_{\omega},0\right), \quad 
\vec \psi_{\omega}=\left(0,\frac{1}{\sqrt{\kappa_1}} \varphi_{\omega}\right). 
\end{equation*}
Then, we have 
\begin{align}
&\|\vec \psi_{\omega}\|_H=\|\vec \phi_{\omega}\|_H, \quad 
(\vec \psi_{\omega}, \vec \phi_{\omega})_H=(\vec \psi_{\omega},i\vec \phi_{\omega})_H=0, \nonumber \\
&S_{\omega}''(\vec \phi_{\omega})\vec \psi_{\omega}
=\left(0, \frac{1}{\sqrt{\kappa_1}} L_1 \varphi_{\omega} \right)
=\left(0, 0\right), \nonumber \\
&S_{\omega}''(\vec \phi_{\omega})\vec \phi_{\omega}
=S_{\omega}^{(3)}(\vec \phi_{\omega}) (\vec \psi_{\omega},\vec \psi_{\omega})
=\left(-\frac{2}{\sqrt{\kappa_1}} \varphi_{\omega}^3, 0 \right). \label{S2S3}
\end{align}
In particular, we have 
$$\dual{S_{\omega}^{(3)}(\vec \phi_{\omega}) (\vec \psi_{\omega},\vec \psi_{\omega})}
{\vec \psi_{\omega}}=0.$$

Moreover, we put 
\begin{align*}
&\nu_1:=\dual{S_{\omega}^{(4)} (\vec \phi_{\omega}) 
(\vec \psi_{\omega},\vec \psi_{\omega},\vec \psi_{\omega})}{\vec \psi_{\omega}}, \\
&\nu_0:=
\dfrac{1}{8} \dual{S_{\omega}''(\vec \phi_{\omega})\vec \phi_{\omega}}{\vec \phi_{\omega}} 
-\dfrac{1}{4} \dual{S_{\omega}^{(3)}(\vec \phi_{\omega}) (\vec \psi_{\omega},\vec \psi_{\omega})}
{\vec \phi_{\omega}}
+\frac{1}{4!}\nu_1.
\end{align*}
Then, by simple computations, we have 
\begin{equation}\label{nu0}
\nu_1=-\frac{6\kappa_2}{\kappa_1^2} \|\varphi_{\omega}\|_{L^4}^4, \quad 
\nu_0=\frac{\kappa_1-\kappa_2}{4\kappa_1^2} \|\varphi_{\omega}\|_{L^4}^4. 
\end{equation}
As we will see below, 
the sign of $\nu_0$ determines the stability and instability of $e^{i\omega t} \vec \phi_{\omega}$ 
for the borderline case $\gamma=\kappa_1$. 

The following lemma plays an important role in the proof of Theorem \ref{thm2}
for both stability and instability results. 

\begin{lemma} \label{lem:pW}
There exists a constant $k_0>0$ such that 
$$\dual{S_{\omega}''(\phi_{\omega})\vec w}{\vec w}\ge k_0 \|\vec w\|_X^2$$ 
for all $\vec w\in W$, where 
$$W=\{\vec w\in X: (\vec w, \vec \phi_{\omega})_H=(\vec w, i\vec \phi_{\omega})_H
=(\vec w, \vec \psi_{\omega})_H=0\}.$$
\end{lemma}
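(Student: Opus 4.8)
The plan is to reduce the coercivity on $W$ to the scalar estimates of Lemma \ref{lem:pos}, applied term by term to the four uncoupled quadratic forms in the decomposition \eqref{QF2}. First I would rewrite the three constraints defining $W$ in terms of real and imaginary parts of the components. Since $\vec\phi_\omega=(\kappa_1^{-1/2}\varphi_\omega,0)$ and $\vec\psi_\omega=(0,\kappa_1^{-1/2}\varphi_\omega)$, a direct computation of the $H$-inner products shows that $\vec w=(w_1,w_2)\in W$ if and only if
$$(\reP w_1,\varphi_\omega)_{L^2}=(\imP w_1,\varphi_\omega)_{L^2}=(\reP w_2,\varphi_\omega)_{L^2}=0,$$
where $(\vec w,\vec\phi_\omega)_H$ controls $\reP w_1$, $(\vec w,i\vec\phi_\omega)_H$ controls $\imP w_1$, and $(\vec w,\vec\psi_\omega)_H$ controls $\reP w_2$; note that there is no constraint imposed on $\imP w_2$.

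Next I would invoke Lemma \ref{lem:pos} on each of the four terms of \eqref{QF2}, using the hypothesis $\gamma=\kappa_1$ so that the coefficients in the last two forms are $1$ and $-1$. For the $L_3$ form acting on $\reP w_1$ and the two $L_1$ forms acting on $\imP w_1$ and on $\reP w_2$, part (1) of Lemma \ref{lem:pos} applies (since $1\le 3\le 3$ and $1\le 1\le 3$), and the orthogonality to $\varphi_\omega$ required there is exactly what membership in $W$ supplies; this yields constants $C_1,C_2,C_3>0$ with $\dual{L_3\reP w_1}{\reP w_1}\ge C_1\|\reP w_1\|_{H^1}^2$ and the analogous bounds for the other two forms. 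For the $L_{-1}$ form acting on $\imP w_2$, since $-1<1$, part (2) of Lemma \ref{lem:pos} gives $\dual{L_{-1}\imP w_2}{\imP w_2}\ge C_4\|\imP w_2\|_{H^1}^2$ with no orthogonality condition at all. Summing the four inequalities and using $\|\vec w\|_X^2=\|\reP w_1\|_{H^1}^2+\|\imP w_1\|_{H^1}^2+\|\reP w_2\|_{H^1}^2+\|\imP w_2\|_{H^1}^2$, the conclusion follows with $k_0=\min\{C_1,C_2,C_3,C_4\}$.

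I do not expect a genuine obstacle here: the substance is entirely contained in Lemma \ref{lem:pos}, and the only point needing care is the bookkeeping — checking that the three conditions cutting out $W$ line up correctly with the three forms ($L_3$ on $\reP w_1$, $L_1$ on $\imP w_1$, $L_1$ on $\reP w_2$) that are merely nonnegative rather than strictly positive, owing to $L_1\varphi_\omega=0$. The degeneracy that makes Theorem \ref{thm2} delicate does not yet appear: it will enter only afterward, in the analysis of the $\vec\phi_\omega$- and $\vec\psi_\omega$-directions controlled by $\nu_0$ and $\nu_1$ from \eqref{nu0}.
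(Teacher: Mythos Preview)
Your proposal is correct and follows exactly the same approach as the paper's proof, which simply notes that $\vec w\in W$ forces $(\reP w_1,\varphi_\omega)_{L^2}=(\imP w_1,\varphi_\omega)_{L^2}=(\reP w_2,\varphi_\omega)_{L^2}=0$ and then appeals to \eqref{QF2} and Lemma~\ref{lem:pos}. You have merely spelled out the term-by-term application of Lemma~\ref{lem:pos} in more detail than the paper does.
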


\begin{proof}
Since $\vec w\in W$ satisfies 
\begin{equation*}
(\reP w_1,\varphi_{\omega})_{L^2}=(\imP w_1, \varphi_{\omega})_{L^2}
=(\reP w_2, \varphi_{\omega})_{L^2}=0, 
\end{equation*} 
the conclusion follows from \eqref{QF2} and Lemma \ref{lem:pos}. 
\end{proof}

\begin{lemma} \label{lem:A3}
For $\lambda\in \R$, 
\begin{align*}
S_{\omega}(\vec \phi_{\omega}+\lambda \vec \psi_{\omega})
=S_{\omega}(\vec \phi_{\omega})+\frac{\nu_1}{4!}\lambda^4, \quad 
\dual{S_{\omega}'(\vec \phi_{\omega}+\lambda \vec \psi_{\omega})}{\vec \psi_{\omega}}
=\frac{\nu_1}{3!} \lambda^3. 
\end{align*}
\end{lemma}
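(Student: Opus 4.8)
\textbf{Proof proposal for Lemma \ref{lem:A3}.}

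The plan is to compute $S_{\omega}(\vec \phi_{\omega}+\lambda \vec \psi_{\omega})$ directly and exploit the fact that $\vec \phi_{\omega}$ and $\vec \psi_{\omega}$ have disjoint support in the component index: $\vec \phi_{\omega}$ lives entirely in the first component and $\vec \psi_{\omega}$ entirely in the second, both proportional to the \emph{real} function $\varphi_{\omega}/\sqrt{\kappa_1}$. Thus $\vec \phi_{\omega}+\lambda \vec \psi_{\omega} = (\varphi_{\omega}/\sqrt{\kappa_1},\ \lambda\varphi_{\omega}/\sqrt{\kappa_1})$, and I would substitute this into $S_{\omega}=E+\omega Q$ term by term. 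The gradient terms and $L^2$ terms contribute the $\lambda$-independent part together with a term proportional to $\lambda^2$ coming from the second component; but crucially, since $\gamma=\kappa_1$, the quartic self-interaction of $u_2$ (coefficient $\kappa_2$) and the cross term $\gamma\,\reP\int u_1^2\overline{u_2}^2 = \kappa_1\int \varphi_{\omega}^4/\kappa_1^2 \cdot \lambda^2$ must be assembled carefully. The key structural fact I expect to use is that the full collection of quadratic-in-$\lambda$ contributions cancels: this is exactly the statement $S_{\omega}''(\vec\phi_{\omega})\vec\psi_{\omega}=0$ (equivalently $L_1\varphi_{\omega}=0$) already recorded in \eqref{S2S3}. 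Likewise the cubic-in-$\lambda$ term vanishes because $\dual{S_{\omega}^{(3)}(\vec\phi_{\omega})(\vec\psi_{\omega},\vec\psi_{\omega})}{\vec\psi_{\omega}}=0$, again from \eqref{S2S3}.

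Concretely, I would write the Taylor expansion of the smooth functional $\lambda \mapsto S_{\omega}(\vec\phi_{\omega}+\lambda\vec\psi_{\omega})$ about $\lambda=0$, which terminates at order $4$ since $S_{\omega}$ is a polynomial of degree $4$ in $\vec u$:
\begin{equation*}
S_{\omega}(\vec\phi_{\omega}+\lambda\vec\psi_{\omega})
= S_{\omega}(\vec\phi_{\omega})
+ \lambda\,\dual{S_{\omega}'(\vec\phi_{\omega})}{\vec\psi_{\omega}}
+ \frac{\lambda^2}{2}\dual{S_{\omega}''(\vec\phi_{\omega})\vec\psi_{\omega}}{\vec\psi_{\omega}}
+ \frac{\lambda^3}{3!}\dual{S_{\omega}^{(3)}(\vec\phi_{\omega})(\vec\psi_{\omega},\vec\psi_{\omega})}{\vec\psi_{\omega}}
+ \frac{\lambda^4}{4!}\nu_1.
\end{equation*}
The first-order term is zero because $S_{\omega}'(\vec\phi_{\omega})=0$; the second-order term is zero by \eqref{S2S3} (the identity $S_{\omega}''(\vec\phi_{\omega})\vec\psi_{\omega}=0$); the third-order term is zero by the displayed identity $\dual{S_{\omega}^{(3)}(\vec\phi_{\omega})(\vec\psi_{\omega},\vec\psi_{\omega})}{\vec\psi_{\omega}}=0$. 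This yields the first claimed formula. For the second formula, I would differentiate the first in $\lambda$: since $\frac{d}{d\lambda}S_{\omega}(\vec\phi_{\omega}+\lambda\vec\psi_{\omega}) = \dual{S_{\omega}'(\vec\phi_{\omega}+\lambda\vec\psi_{\omega})}{\vec\psi_{\omega}}$, differentiating $S_{\omega}(\vec\phi_{\omega})+\frac{\nu_1}{4!}\lambda^4$ gives $\frac{\nu_1}{3!}\lambda^3$ immediately. Alternatively one can expand $\dual{S_{\omega}'(\vec\phi_{\omega}+\lambda\vec\psi_{\omega})}{\vec\psi_{\omega}}$ in a finite Taylor series in $\lambda$ and observe the constant, linear, and quadratic coefficients all vanish by the same three identities.

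The only genuine computation is checking that the fourth-order coefficient equals $\nu_1$ as \emph{defined}; but $\nu_1$ was by definition set to $\dual{S_{\omega}^{(4)}(\vec\phi_{\omega})(\vec\psi_{\omega},\vec\psi_{\omega},\vec\psi_{\omega})}{\vec\psi_{\omega}}$, which is exactly the fourth derivative of $\lambda\mapsto S_{\omega}(\vec\phi_{\omega}+\lambda\vec\psi_{\omega})$ at $\lambda=0$, so this match is automatic and no evaluation of the explicit value $-6\kappa_2\|\varphi_{\omega}\|_{L^4}^4/\kappa_1^2$ is needed here. I do not anticipate a real obstacle: the lemma is essentially a repackaging of the vanishing identities in \eqref{S2S3} via the finite Taylor expansion of a quartic polynomial functional. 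The one point requiring a line of care is justifying that $S_{\omega}$ restricted to the line $\{\vec\phi_{\omega}+\lambda\vec\psi_{\omega}\}$ is genuinely a degree-$4$ polynomial in $\lambda$ (so the Taylor series terminates), which is clear from the explicit form of $E$ and $Q$ since every nonlinear term is quartic in $\vec u$.
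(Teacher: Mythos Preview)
Your proposal is correct and follows essentially the same argument as the paper: write the finite Taylor expansion of $\lambda\mapsto S_{\omega}(\vec\phi_{\omega}+\lambda\vec\psi_{\omega})$, kill the first three nontrivial coefficients using $S_{\omega}'(\vec\phi_{\omega})=0$, $S_{\omega}''(\vec\phi_{\omega})\vec\psi_{\omega}=0$, and $\dual{S_{\omega}^{(3)}(\vec\phi_{\omega})(\vec\psi_{\omega},\vec\psi_{\omega})}{\vec\psi_{\omega}}=0$, and then differentiate in $\lambda$ for the second identity. Your added remark that the expansion terminates because $S_{\omega}$ is quartic is a worthwhile clarification, but otherwise the two proofs coincide.
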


\begin{proof}
By Taylor's expansion, we have 
\begin{align*}
&S_{\omega}(\vec \phi_{\omega}+\lambda \vec \psi_{\omega})
=S_{\omega}(\vec \phi_{\omega})
+\lambda  \dual{S_{\omega}'(\vec \phi_{\omega})}{\vec \psi_{\omega}} 
+\frac{\lambda^2}{2} \dual{S_{\omega}''(\vec \phi_{\omega}) \vec \psi_{\omega}}{ \vec \psi_{\omega}} \\
&\hspace{10mm}
+\frac{\lambda^3}{3!} \dual{S_{\omega}^{(3)}(\vec \phi_{\omega})(\vec \psi_{\omega},\vec \psi_{\omega})}
{ \vec \psi_{\omega}} 
+\frac{\lambda^4}{4!} 
\dual{S_{\omega}^{(4)}(\vec \phi_{\omega})(\vec \psi_{\omega},\vec \psi_{\omega},\vec \psi_{\omega})}
{ \vec \psi_{\omega}}. 
\end{align*}
Since $S_{\omega}'(\vec \phi_{\omega})=S_{\omega}''(\vec \phi_{\omega}) \vec \psi_{\omega}=0$
and $\dual{S_{\omega}^{(3)}(\vec \phi_{\omega}) (\vec \psi_{\omega},\vec \psi_{\omega})}
{\vec \psi_{\omega}}=0$, we have 
\begin{align*}
&S_{\omega}(\vec \phi_{\omega}+\lambda \vec \psi_{\omega})
=S_{\omega}(\vec \phi_{\omega})+\frac{\nu_1}{4!} \lambda^4. 
\end{align*}

Moreover, by differentiating this identity with respect to $\lambda$, 
we have the second identity. 
\end{proof}

\begin{lemma} \label{lem:S0} 
For $\lambda\in \R$, 
\begin{align*}
S_{\omega}'(\vec \phi_{\omega}+\lambda \vec \psi_{\omega})
=\frac{\lambda^2}{2} S_{\omega}^{(3)}(\vec \phi_{\omega}) (\vec \psi_{\omega},\vec \psi_{\omega})
+o(\lambda^2). 
\end{align*}
\end{lemma}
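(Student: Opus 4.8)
\noindent\emph{Proof proposal.} The plan is to Taylor-expand $S_{\omega}'$ about $\vec \phi_{\omega}$ in the direction $\vec \psi_{\omega}$, exploiting that the nonlinearity in \eqref{nls} is cubic, so $E$ and hence $S_{\omega}$ is a polynomial of degree four on $X$. Consequently $S_{\omega}'$ is a polynomial of degree three, and its Taylor expansion in $\lambda$ terminates exactly:
\begin{align*}
S_{\omega}'(\vec \phi_{\omega}+\lambda \vec \psi_{\omega})
&=S_{\omega}'(\vec \phi_{\omega})+\lambda S_{\omega}''(\vec \phi_{\omega})\vec \psi_{\omega}
+\frac{\lambda^2}{2} S_{\omega}^{(3)}(\vec \phi_{\omega})(\vec \psi_{\omega},\vec \psi_{\omega}) \\
&\quad +\frac{\lambda^3}{3!} S_{\omega}^{(4)}(\vec \phi_{\omega})(\vec \psi_{\omega},\vec \psi_{\omega},\vec \psi_{\omega}).
\end{align*}
Since $S_{\omega}'(\vec \phi_{\omega})=0$ and $S_{\omega}''(\vec \phi_{\omega})\vec \psi_{\omega}=0$ (both recorded just before the lemma), the first two terms drop out; the remaining cubic term has $X$-norm at most $C|\lambda|^3$ and is therefore $o(\lambda^2)$ as $\lambda\to 0$. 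This already yields the claimed identity.

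I would also cross-check it by direct substitution, which simultaneously confirms the constants. Writing $\vec \phi_{\omega}+\lambda \vec \psi_{\omega}=(\tfrac{1}{\sqrt{\kappa_1}}\varphi_{\omega},\tfrac{\lambda}{\sqrt{\kappa_1}}\varphi_{\omega})$ and using $S_{\omega}'(\vec u)=E'(\vec u)+\omega(u_1,u_2)$ together with $\gamma=\kappa_1$ and $-\pd_x^2\varphi_{\omega}+\omega\varphi_{\omega}-\varphi_{\omega}^3=0$, the $\lambda^0$-terms in the first component and the $\lambda^1$-terms in the second component cancel, leaving
$$S_{\omega}'(\vec \phi_{\omega}+\lambda \vec \psi_{\omega})
=\left(-\frac{\lambda^2}{\sqrt{\kappa_1}}\varphi_{\omega}^3,\ -\frac{\kappa_2\lambda^3}{\kappa_1\sqrt{\kappa_1}}\varphi_{\omega}^3\right).$$
Comparing with \eqref{S2S3}, which gives $\tfrac{\lambda^2}{2}S_{\omega}^{(3)}(\vec \phi_{\omega})(\vec \psi_{\omega},\vec \psi_{\omega})=(-\tfrac{\lambda^2}{\sqrt{\kappa_1}}\varphi_{\omega}^3,0)$, the difference is $(0,-\tfrac{\kappa_2\lambda^3}{\kappa_1\sqrt{\kappa_1}}\varphi_{\omega}^3)=O(|\lambda|^3)$ in $X$, consistent with the first approach.

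There is essentially no obstacle here. The only points needing a word of care are that the Taylor expansion genuinely terminates (which requires only that $S_{\omega}^{(4)}$ be constant and all higher derivatives vanish, a consequence of $S_{\omega}$ being quartic) and that the remainder estimate be read in the norm of $X$ rather than pointwise; both are routine. The substantive input — the vanishing of $S_{\omega}'(\vec \phi_{\omega})$ and of $S_{\omega}''(\vec \phi_{\omega})\vec \psi_{\omega}$ — has already been established in the computations preceding \eqref{S2S3}.
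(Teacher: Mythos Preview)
Your proof is correct and follows essentially the same approach as the paper: Taylor-expand $S_{\omega}'$ about $\vec\phi_{\omega}$ in the direction $\vec\psi_{\omega}$ and use $S_{\omega}'(\vec\phi_{\omega})=0$ and $S_{\omega}''(\vec\phi_{\omega})\vec\psi_{\omega}=0$ to kill the zeroth- and first-order terms. Your observation that the expansion terminates exactly (since $S_{\omega}$ is quartic) and your direct cross-check are nice additional confirmations, but the paper simply writes the second-order Taylor expansion with an $o(\lambda^2)$ remainder and invokes the two vanishing conditions.
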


\begin{proof}
Since $S_{\omega}'(\vec \phi_{\omega})=S_{\omega}''(\vec \phi_{\omega}) \vec \psi_{\omega}=0$, 
we have 
\begin{align*}
&S_{\omega}'(\vec \phi_{\omega}+\lambda \vec \psi_{\omega}) \\
&=S_{\omega}'(\vec \phi_{\omega})
+\lambda S_{\omega}''(\vec \phi_{\omega}) \vec \psi_{\omega} 
+\frac{\lambda^2}{2} S_{\omega}^{(3)}(\vec \phi_{\omega}) (\vec \psi_{\omega},\vec \psi_{\omega})
+o(\lambda^2) \\
&=\frac{\lambda^2}{2} S_{\omega}^{(3)}(\vec \phi_{\omega}) (\vec \psi_{\omega},\vec \psi_{\omega})
+o(\lambda^2). 
\end{align*}
This completes the proof. 
\end{proof}

\begin{lemma}\label{lem:S1}
For $\lambda\in \R$ and $\vec z\in X$, 
\begin{align*}
&S_{\omega}(\vec \phi_{\omega}+\lambda \vec \psi_{\omega}+\vec z)
-S_{\omega}(\vec \phi_{\omega}) \\
&=\frac{\nu_1}{4!} \lambda^4
+\frac{\lambda^2}{2} \dual{S_{\omega}^{(3)}(\vec \phi_{\omega})(\vec \psi_{\omega},\vec \psi_{\omega})}
{\vec z} 
+\frac{1}{2}\dual{S_{\omega}''(\vec \phi_{\omega}) \vec z}{\vec z} \\
&\hspace{7mm} 
+o(\lambda^4+\|\vec z\|_X^2). 
\end{align*}
\end{lemma}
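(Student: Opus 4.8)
The plan is to apply Taylor's formula to $S_{\omega}$ at the base point $\vec\phi_{\omega}+\lambda\vec\psi_{\omega}$ with increment $\vec z$, and then to identify the first two terms by means of Lemmas~\ref{lem:A3} and~\ref{lem:S0}. What makes this clean is that $S_{\omega}$ is a polynomial of degree four on $X$ — its only non-quadratic part is $-\sum_{j}\frac{\kappa_j}{4}\|u_j\|_{L^4}^4-\frac{\gamma}{2}\reP\int_{\R}u_1^2\overline{u_2}^2\,dx$ — so $S_{\omega}^{(5)}\equiv 0$, $S_{\omega}^{(4)}$ is a constant bounded symmetric quadrilinear form on $X$ (bounded because $H^1(\R)\hookrightarrow L^4(\R)$), and Taylor's formula at $\vec\phi_{\omega}+\lambda\vec\psi_{\omega}$ terminates with no remainder:
\begin{align*}
S_{\omega}(\vec\phi_{\omega}+\lambda\vec\psi_{\omega}+\vec z)
&=S_{\omega}(\vec\phi_{\omega}+\lambda\vec\psi_{\omega})
+\dual{S_{\omega}'(\vec\phi_{\omega}+\lambda\vec\psi_{\omega})}{\vec z}
+\tfrac12\dual{S_{\omega}''(\vec\phi_{\omega}+\lambda\vec\psi_{\omega})\vec z}{\vec z}\\
&\quad+\tfrac1{3!}\dual{S_{\omega}^{(3)}(\vec\phi_{\omega}+\lambda\vec\psi_{\omega})(\vec z,\vec z)}{\vec z}
+\tfrac1{4!}\dual{S_{\omega}^{(4)}(\vec z,\vec z,\vec z)}{\vec z}.
\end{align*}
By Lemma~\ref{lem:A3} the first term equals $S_{\omega}(\vec\phi_{\omega})+\frac{\nu_1}{4!}\lambda^4$. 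For the second term, the computation in the proof of Lemma~\ref{lem:S0} in fact gives the exact identity $S_{\omega}'(\vec\phi_{\omega}+\lambda\vec\psi_{\omega})=\frac{\lambda^2}{2}S_{\omega}^{(3)}(\vec\phi_{\omega})(\vec\psi_{\omega},\vec\psi_{\omega})+\frac{\lambda^3}{3!}S_{\omega}^{(4)}(\vec\phi_{\omega})(\vec\psi_{\omega},\vec\psi_{\omega},\vec\psi_{\omega})$ (using $S_{\omega}'(\vec\phi_{\omega})=0$ and $S_{\omega}''(\vec\phi_{\omega})\vec\psi_{\omega}=0$), so pairing with $\vec z$ produces $\frac{\lambda^2}{2}\dual{S_{\omega}^{(3)}(\vec\phi_{\omega})(\vec\psi_{\omega},\vec\psi_{\omega})}{\vec z}$ plus a term of size $O(|\lambda|^3\|\vec z\|_X)$.

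Next I would expand $S_{\omega}''(\vec\phi_{\omega}+\lambda\vec\psi_{\omega})$ about $\vec\phi_{\omega}$, which turns the third term into $\frac12\dual{S_{\omega}''(\vec\phi_{\omega})\vec z}{\vec z}$ plus a term $O\big((|\lambda|+\lambda^2)\|\vec z\|_X^2\big)$; and I would bound the cubic and quartic terms of the expansion by $O\big((1+|\lambda|)\|\vec z\|_X^3\big)$ and $O(\|\vec z\|_X^4)$ respectively. All the $O$-constants here can be chosen independent of $\lambda$ for $|\lambda|\le 1$, because the forms $S_{\omega}^{(k)}(\vec\phi_{\omega}+\lambda\vec\psi_{\omega})$ for $k=2,3$ are uniformly bounded on $X$ along that segment, by $H^1(\R)\hookrightarrow L^4(\R)\cap L^\infty(\R)$. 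It then remains to check that each error term is $o(\lambda^4+\|\vec z\|_X^2)$ as $(\lambda,\vec z)\to(0,\vec 0)$ in $\R\times X$. The pure powers $\|\vec z\|_X^3$ and $\|\vec z\|_X^4$ are $o(\|\vec z\|_X^2)$ trivially, and for the mixed terms one uses the elementary estimates $|\lambda|\,\|\vec z\|_X^2\le|\lambda|\big(\lambda^4+\|\vec z\|_X^2\big)$, $\lambda^2\|\vec z\|_X^2\le\lambda^2\big(\lambda^4+\|\vec z\|_X^2\big)$ and $|\lambda|^3\|\vec z\|_X\le\tfrac12|\lambda|\big(\lambda^4+\|\vec z\|_X^2\big)$ (the last because $2\lambda^2\|\vec z\|_X\le\lambda^4+\|\vec z\|_X^2$), whose right-hand sides are all $o(\lambda^4+\|\vec z\|_X^2)$. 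Collecting the three surviving terms gives exactly the claimed identity.

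The whole argument is essentially bookkeeping, so the one place to be slightly careful is the last step: showing that the error terms — in particular $\lambda^2\|\vec z\|_X^2$, the largest of them — are genuinely little-$o$, not merely big-$O$, of $\lambda^4+\|\vec z\|_X^2$; the simple bounds above settle this. No analytic input beyond Lemmas~\ref{lem:A3} and~\ref{lem:S0} and the degree-four polynomial structure of $S_{\omega}$ is needed.
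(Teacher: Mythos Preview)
Your proposal is correct and follows essentially the same route as the paper: Taylor-expand $S_{\omega}$ at the base point $\vec\phi_{\omega}+\lambda\vec\psi_{\omega}$ in the increment $\vec z$, identify the zeroth- and first-order terms via Lemmas~\ref{lem:A3} and~\ref{lem:S0}, and replace $S_{\omega}''(\vec\phi_{\omega}+\lambda\vec\psi_{\omega})$ by $S_{\omega}''(\vec\phi_{\omega})$ up to $O(\lambda\|\vec z\|_X^2)$. Your version is slightly more explicit than the paper's --- you exploit the degree-four polynomial structure to terminate the Taylor expansion exactly and you spell out why each mixed error term is genuinely $o(\lambda^4+\|\vec z\|_X^2)$ --- but the argument is the same.
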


\begin{proof}
By Taylor's expansion, we have 
\begin{align*}
&S_{\omega}(\vec \phi_{\omega}+\lambda \vec \psi_{\omega}+\vec z)
=S_{\omega}(\vec \phi_{\omega}+\lambda \vec \psi_{\omega}) \\
&\hspace{10mm} 
+\dual{S_{\omega}'(\vec \phi_{\omega}+\lambda \vec \psi_{\omega})}{\vec z}
+\frac{1}{2} \dual{S_{\omega}''(\vec \phi_{\omega}+\lambda \vec \psi_{\omega}) \vec z}{\vec z} 
+o(\|\vec z\|_X^2). 
\end{align*}
Here, by Lemma \ref{lem:A3}, we have 
$S_{\omega}(\vec \phi_{\omega}+\lambda \vec \psi_{\omega})
=S_{\omega}(\vec \phi_{\omega})+\dfrac{\nu_1}{4!} \lambda^4$. 

Next, it follows from Lemma \ref{lem:S0} that 
\begin{align*}
\dual{S_{\omega}'(\vec \phi_{\omega}+\lambda \vec \psi_{\omega})}{\vec z} 
=\frac{\lambda^2}{2} \dual{S_{\omega}^{(3)}(\vec \phi_{\omega})(\vec \psi_{\omega},\vec \psi_{\omega})}
{\vec z}+o(\lambda^4+\|\vec z\|_X^2). 
\end{align*}
Moreover, we have 
$$\dual{S_{\omega}''(\vec \phi_{\omega}+\lambda \vec \psi_{\omega}) \vec z}{\vec z} 
=\dual{S_{\omega}''(\vec \phi_{\omega}) \vec z}{\vec z}
+O(\lambda \|\vec z\|_X^2).$$

Thus, we have the desired estimate. 
\end{proof}

\begin{lemma}\label{lem:Q1}
Let $\vec v=\lambda \vec \psi_{\omega}+\mu \vec \phi_{\omega}+\vec w$ 
with $\lambda$, $\mu\in \R$ and $\vec w\in W$. 
Assume that $\|\vec \phi_{\omega}+\vec v\|_H^2=\|\vec \phi_{\omega}\|_H^2$. 
Then, 
$$\mu=-\frac{\lambda^2}{2}+O(\mu^2+\|\vec w\|_X^2).$$
\end{lemma}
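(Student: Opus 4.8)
The plan is to expand the charge constraint $\|\vec \phi_{\omega}+\vec v\|_H^2=\|\vec \phi_{\omega}\|_H^2$ directly and then solve for $\mu$. First I would write
$\vec \phi_{\omega}+\vec v=(1+\mu)\vec \phi_{\omega}+\lambda \vec \psi_{\omega}+\vec w$
and expand the squared $H$-norm bilinearly. Because $\vec w\in W$ we have $(\vec w,\vec \phi_{\omega})_H=(\vec w,\vec \psi_{\omega})_H=0$, and by the identities recorded before Lemma \ref{lem:pW} we have $(\vec \psi_{\omega},\vec \phi_{\omega})_H=0$ and $\|\vec \psi_{\omega}\|_H=\|\vec \phi_{\omega}\|_H$. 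Hence every mixed term vanishes and only the three diagonal terms survive:
\[
\|\vec \phi_{\omega}+\vec v\|_H^2
=(1+\mu)^2\|\vec \phi_{\omega}\|_H^2+\lambda^2\|\vec \phi_{\omega}\|_H^2+\|\vec w\|_H^2 .
\]

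Next I would impose the constraint. Setting the right-hand side equal to $\|\vec \phi_{\omega}\|_H^2$ and dividing by $\|\vec \phi_{\omega}\|_H^2>0$ gives $(1+\mu)^2+\lambda^2+\|\vec w\|_H^2/\|\vec \phi_{\omega}\|_H^2=1$, that is,
\[
2\mu=-\mu^2-\lambda^2-\frac{\|\vec w\|_H^2}{\|\vec \phi_{\omega}\|_H^2}.
\]
Finally, since $\|\vec w\|_H\le\|\vec w\|_X$, the last term is $O(\|\vec w\|_X^2)$, while $-\mu^2/2=O(\mu^2)$, so
$\mu=-\lambda^2/2+O(\mu^2+\|\vec w\|_X^2)$, which is the claim.

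There is no genuine obstacle here; this is a bookkeeping computation. The only points requiring care are to use \emph{all} of the orthogonality relations defining $W$ (both $(\vec w,\vec \phi_{\omega})_H=0$ and $(\vec w,\vec \psi_{\omega})_H=0$, together with $(\vec \psi_{\omega},\vec \phi_{\omega})_H=0$) so that the expansion collapses to exactly three terms, and to resist the temptation to solve the quadratic in $\mu$ exactly: keeping $\mu^2$ on the error side yields precisely the implicit identity $\mu=-\lambda^2/2+O(\mu^2+\|\vec w\|_X^2)$ needed in the subsequent argument.
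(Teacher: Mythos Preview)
Your proposal is correct and follows exactly the same route as the paper: expand $\|\vec \phi_{\omega}+\vec v\|_H^2$ using the mutual $H$-orthogonality of $\vec \phi_{\omega}$, $\vec \psi_{\omega}$ and $\vec w$, then use $\|\vec \psi_{\omega}\|_H=\|\vec \phi_{\omega}\|_H$ to isolate $\mu$. The paper's displayed identity for $\mu$ carries a $+\tfrac{1}{2}(\mu^2+\cdots)$ where your computation (correctly) gives a minus sign, but this is immaterial once the term is absorbed into $O(\mu^2+\|\vec w\|_X^2)$.
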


\begin{proof}
Since $\vec \psi_{\omega}$, $\vec \phi_{\omega}$ and $\vec w$ are orthogonal to each other in $H$, 
we have 
$$\|\vec \phi_{\omega}\|_H^2=\|\vec \phi_{\omega}+\vec v\|_H^2
=\lambda^2 \|\vec \psi_{\omega}\|_H^2+(1+\mu)^2 \|\vec \phi_{\omega}\|_H^2+\|\vec w\|_H^2.$$
Moreover, since $\|\vec \psi_{\omega}\|_H=\|\vec \phi_{\omega}\|_H$, we have 
$$\mu=-\frac{\lambda^2}{2}
+\frac{1}{2}\left(\mu^2+\frac{\|\vec w\|_H^2}{\|\vec \phi_{\omega}\|_H^2}\right),$$
which implies the desired result. 
\end{proof}

\begin{lemma}\label{lem:QS1}
Let $\vec v=\lambda \vec \psi_{\omega}+\mu \vec \phi_{\omega}+\vec w$ 
with $\lambda$, $\mu\in \R$ and $\vec w\in W$. 
Assume that $\|\vec \phi_{\omega}+\vec v\|_H^2=\|\vec \phi_{\omega}\|_H^2$.
Then, 
\begin{align*}
E(\vec \phi_{\omega}+\vec v)-E(\vec \phi_{\omega}) 
&=\nu_0 \lambda^4
+\frac{1}{2}\dual{S_{\omega}''(\vec \phi_{\omega}) \vec w}{\vec w} 
+o(\lambda^4+\|\vec w\|_X^2).
\end{align*}
\end{lemma}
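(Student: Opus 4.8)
The plan is to first reduce the identity for $E$ to one for $S_\omega$ using the charge constraint, then to Taylor-expand $S_\omega$ by means of Lemma~\ref{lem:S1}, and finally to use the identity $S_\omega''(\vec\phi_\omega)\vec\phi_\omega=S_\omega^{(3)}(\vec\phi_\omega)(\vec\psi_\omega,\vec\psi_\omega)$ recorded in \eqref{S2S3} together with the constraint relation from Lemma~\ref{lem:Q1} to dispose of the cross terms.

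Since $Q(\vec u)=\tfrac12\|\vec u\|_H^2$ and $S_\omega=E+\omega Q$, the hypothesis $\|\vec\phi_\omega+\vec v\|_H^2=\|\vec\phi_\omega\|_H^2$ means $Q(\vec\phi_\omega+\vec v)=Q(\vec\phi_\omega)$, so that
$$E(\vec\phi_\omega+\vec v)-E(\vec\phi_\omega)=S_\omega(\vec\phi_\omega+\vec v)-S_\omega(\vec\phi_\omega),$$
and it suffices to expand the right-hand side. Writing $\vec v=\lambda\vec\psi_\omega+\vec z$ with $\vec z:=\mu\vec\phi_\omega+\vec w$, I would first note that Lemma~\ref{lem:Q1}, after absorbing the $O(\mu^2)$ term (a short bootstrap for $|\mu|$ small), gives $\mu=O(\lambda^2+\|\vec w\|_X^2)$, hence $\mu^2=O(\lambda^4+\|\vec w\|_X^2)$ and $\|\vec z\|_X^2=O(\lambda^4+\|\vec w\|_X^2)$; consequently the error $o(\lambda^4+\|\vec z\|_X^2)$ supplied by Lemma~\ref{lem:S1} is of the desired form $o(\lambda^4+\|\vec w\|_X^2)$.

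Next I would apply Lemma~\ref{lem:S1} and expand the two pairings in $\vec z=\mu\vec\phi_\omega+\vec w$ by (bi- and tri-)linearity. Put $B:=\dual{S_\omega^{(3)}(\vec\phi_\omega)(\vec\psi_\omega,\vec\psi_\omega)}{\vec\phi_\omega}$ and $A:=\dual{S_\omega^{(3)}(\vec\phi_\omega)(\vec\psi_\omega,\vec\psi_\omega)}{\vec w}$; by \eqref{S2S3} one has $S_\omega''(\vec\phi_\omega)\vec\phi_\omega=S_\omega^{(3)}(\vec\phi_\omega)(\vec\psi_\omega,\vec\psi_\omega)$, so also $\dual{S_\omega''(\vec\phi_\omega)\vec\phi_\omega}{\vec w}=A$ and $\dual{S_\omega''(\vec\phi_\omega)\vec\phi_\omega}{\vec\phi_\omega}=B$. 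A short computation then yields
$$S_\omega(\vec\phi_\omega+\vec v)-S_\omega(\vec\phi_\omega)=\frac{\nu_1}{4!}\lambda^4+\Bigl(\frac{\lambda^2}{2}+\mu\Bigr)A+\frac{B}{2}\,\mu(\lambda^2+\mu)+\frac12\dual{S_\omega''(\vec\phi_\omega)\vec w}{\vec w}+o(\lambda^4+\|\vec w\|_X^2).$$
Here $A=O(\|\vec w\|_X)$, and by Lemma~\ref{lem:Q1} one has $\tfrac{\lambda^2}{2}+\mu=O(\lambda^4+\|\vec w\|_X^2)$ and $\mu(\lambda^2+\mu)=-\tfrac14\lambda^4+o(\lambda^4+\|\vec w\|_X^2)$ (the $O(\lambda^4+\|\vec w\|_X^2)$ correction in $\mu$ contributes only $o$ there). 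Hence $\bigl(\tfrac{\lambda^2}{2}+\mu\bigr)A=o(\lambda^4+\|\vec w\|_X^2)$ and $\tfrac{B}{2}\mu(\lambda^2+\mu)=-\tfrac{B}{8}\lambda^4+o(\lambda^4+\|\vec w\|_X^2)$. Collecting, the $\lambda^4$-coefficient is $\tfrac{\nu_1}{4!}-\tfrac{B}{8}=\tfrac18 B-\tfrac14 B+\tfrac{\nu_1}{4!}$, which is precisely $\nu_0$ by its definition (recall $\dual{S_\omega^{(3)}(\vec\phi_\omega)(\vec\psi_\omega,\vec\psi_\omega)}{\vec\phi_\omega}=\dual{S_\omega''(\vec\phi_\omega)\vec\phi_\omega}{\vec\phi_\omega}=B$), and the stated identity follows.

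The only step that is not mechanical is the treatment of $\bigl(\tfrac{\lambda^2}{2}+\mu\bigr)A$: the naive bound $\lambda^2\|\vec w\|_X$ is \emph{not} $o(\lambda^4+\|\vec w\|_X^2)$, so this term must not be estimated prematurely. It becomes negligible only because the $\vec w$-linear parts of the quadratic and cubic contributions combine — thanks to the coincidence $S_\omega''(\vec\phi_\omega)\vec\phi_\omega=S_\omega^{(3)}(\vec\phi_\omega)(\vec\psi_\omega,\vec\psi_\omega)$ in \eqref{S2S3} — into the single prefactor $\tfrac{\lambda^2}{2}+\mu$, which the charge constraint $\mu=-\tfrac{\lambda^2}{2}+O(\lambda^4+\|\vec w\|_X^2)$ forces to be of higher order. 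Recognizing and carrying out this cancellation exactly is the crux; everything else is Taylor expansion and absorbing $O$'s into $o$'s.
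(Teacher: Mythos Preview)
Your argument is correct and follows essentially the same route as the paper: reduce $E$ to $S_\omega$ via the charge constraint, apply Lemma~\ref{lem:S1} with $\vec z=\mu\vec\phi_\omega+\vec w$, and use Lemma~\ref{lem:Q1} together with the identity \eqref{S2S3} to kill the $\vec w$-linear cross terms and identify the $\lambda^4$-coefficient as $\nu_0$. Your grouping of the $\vec w$-linear pieces into $\bigl(\tfrac{\lambda^2}{2}+\mu\bigr)A$ is exactly the paper's cancellation $\tfrac{\lambda^2}{2}\dual{S_\omega^{(3)}(\vec\phi_\omega)(\vec\psi_\omega,\vec\psi_\omega)}{\vec w}-\tfrac{\lambda^2}{2}\dual{S_\omega''(\vec\phi_\omega)\vec\phi_\omega}{\vec w}=0$, just written before substituting $\mu\approx-\lambda^2/2$ rather than after.
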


\begin{proof}
By Lemmas \ref{lem:S1} and \ref{lem:Q1}, we have 
\begin{align*}
&E(\vec \phi_{\omega}+\vec v)-E(\vec \phi_{\omega}) 
=S_{\omega}(\vec \phi_{\omega}+\vec v)-S_{\omega}(\vec \phi_{\omega}) \\
&=S_{\omega}(\vec \phi_{\omega}+\lambda \vec \psi_{\omega}+\mu \vec \phi_{\omega}+\vec w)
-S_{\omega}(\vec \phi_{\omega}) \\
&=\frac{\nu_1}{4!} \lambda^4
+\frac{\lambda^2}{2} 
\dual{S_{\omega}^{(3)}(\vec \phi_{\omega})(\vec \psi_{\omega},\vec \psi_{\omega})}
{\mu \vec \phi_{\omega}+\vec w} \\
&\hspace{3mm}
+\frac{1}{2}\dual{S_{\omega}''(\vec \phi_{\omega})(\mu \vec \phi_{\omega}+\vec w)}
{\mu \vec \phi_{\omega}+\vec w} 
+o(\lambda^4+\|\mu \vec \phi_{\omega}+\vec w\|_X^2) \\
&=\nu_0 \lambda^4
+\frac{\lambda^2}{2} 
\dual{S_{\omega}^{(3)}(\vec \phi_{\omega})(\vec \psi_{\omega},\vec \psi_{\omega})}{\vec w} 
-\frac{\lambda^2}{2}\dual{S_{\omega}''(\vec \phi_{\omega})\vec \phi_{\omega}}{\vec w} \\
&\hspace{3mm} 
+\frac{1}{2}\dual{S_{\omega}''(\vec \phi_{\omega}) \vec w}{\vec w} 
+o(\lambda^4+\|\vec w\|_X^2). 
\end{align*}
Here, by \eqref{S2S3}, 
the second and the third terms in the last equation 
cancel each other out. 
This completes the proof. 
\end{proof}

To prove the stability part of Theorem \ref{thm2}, 
we use the following proposition (see \cite{GSS1}). 
For $\eps>0$, we define 
$$U_{\eps}(\vec \phi_{\omega})
=\{\vec u\in X: \inf_{\theta\in \R} \|\vec u-e^{i\theta} \vec \phi_{\omega}\|_X<\eps\}.$$

\begin{proposition}\label{prop4}
Assume that there exist positive constants 
$p$, $C$ and $\eps$ such that 
\begin{align*}
E(\vec u) \ge E(\vec \phi_{\omega}) 
+C \inf_{\theta\in \R} \|\vec u-e^{i\theta} \vec \phi_{\omega}\|_X^p 
\end{align*}
for all $\vec u\in U_{\eps}(\vec \phi_{\omega})$ satisfying 
$Q(\vec u)=Q(\vec \phi_{\omega})$. 
Then, the standing wave solution $e^{i\omega t} \vec \phi_{\omega}$ of \eqref{nls} is stable. 
\end{proposition}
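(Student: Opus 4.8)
The plan is to argue by contradiction, combining the two conservation laws with the hypothesized coercivity of $E$ on the charge sphere near $\vec\phi_\omega$; this is the standard Cazenave--Lions / Grillakis--Shatah--Strauss Lyapunov argument adapted to the present setting. Suppose $e^{i\omega t}\vec\phi_\omega$ is unstable. Then there exist $\eps_0\in(0,\eps)$, a sequence $\vec u_{0,n}\in X$ with $\|\vec u_{0,n}-\vec\phi_\omega\|_X\to0$, and times $s_n\in\R$ such that the corresponding solutions $\vec u_n(t)$ of \eqref{nls} satisfy $\inf_{\theta\in\R}\|\vec u_n(s_n)-e^{i\theta}\vec\phi_\omega\|_X\ge\eps_0$. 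Since $t\mapsto\vec u_n(t)$ is continuous from $\R$ into $X$, the function $d_n(t):=\inf_{\theta\in\R}\|\vec u_n(t)-e^{i\theta}\vec\phi_\omega\|_X$ is continuous, and $d_n(0)\le\|\vec u_{0,n}-\vec\phi_\omega\|_X\to0$; hence, for $n$ large, the intermediate value theorem provides a time $t_n$ between $0$ and $s_n$ with $d_n(t_n)=\eps_0$. In particular $\vec u_n(t_n)\in U_\eps(\vec\phi_\omega)$ (this is why $\eps_0<\eps$ is imposed) and $\|\vec u_n(t_n)\|_X$ is bounded uniformly in $n$.

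The next step is to project onto the charge sphere, which is needed because the coercivity in the hypothesis is only assumed on $\{Q=Q(\vec\phi_\omega)\}$ while the perturbed data generically have a slightly different charge. Since $Q$ is conserved, $Q(\vec u_n(t_n))=Q(\vec u_{0,n})\to Q(\vec\phi_\omega)>0$, so for $n$ large the numbers $\beta_n:=\sqrt{Q(\vec\phi_\omega)/Q(\vec u_n(t_n))}$ are well defined and $\beta_n\to1$. Put $\vec w_n:=\beta_n\vec u_n(t_n)$; then $Q(\vec w_n)=Q(\vec\phi_\omega)$, and since $\|\vec w_n-\vec u_n(t_n)\|_X=|\beta_n-1|\,\|\vec u_n(t_n)\|_X\to0$, for $n$ large we have $\vec w_n\in U_\eps(\vec\phi_\omega)$ and $\inf_{\theta\in\R}\|\vec w_n-e^{i\theta}\vec\phi_\omega\|_X\ge\eps_0-o(1)$. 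Applying the hypothesis to $\vec w_n$ gives
$$E(\vec w_n)\ge E(\vec\phi_\omega)+C\,\bigl(\eps_0-o(1)\bigr)^p.$$

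On the other hand, $E$ is locally Lipschitz on $X$ (the nonlinear terms in $E$ are quartic, hence controlled on bounded subsets of $X$ via the Gagliardo--Nirenberg inequality), and both $\vec w_n$ and $\vec u_n(t_n)$ lie in a fixed bounded ball of $X$, so $|E(\vec w_n)-E(\vec u_n(t_n))|\to0$. Combining this with conservation of $E$ and the continuity of $E$ at $\vec\phi_\omega$ yields $E(\vec w_n)=E(\vec u_n(t_n))+o(1)=E(\vec u_{0,n})+o(1)\to E(\vec\phi_\omega)$. Passing to the limit in the displayed inequality gives $E(\vec\phi_\omega)\ge E(\vec\phi_\omega)+C\eps_0^p$, which is impossible since $C>0$ and $\eps_0>0$. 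This contradiction shows that $e^{i\omega t}\vec\phi_\omega$ is stable.

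I expect the only genuinely delicate points to be the projection onto the charge sphere together with the accompanying check that rescaling by $\beta_n\to1$ perturbs $E$ by $o(1)$ uniformly over the relevant bounded set; the first-exit-time bookkeeping (and the choice $\eps_0<\eps$) is routine but should be stated carefully, and everything else reduces to the conservation laws and the continuity of $E$ on $X$.
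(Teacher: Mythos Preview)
Your argument is correct and is precisely the standard Grillakis--Shatah--Strauss Lyapunov argument; the paper does not give its own proof of Proposition~\ref{prop4} but simply refers to \cite{GSS1}, so your proof is exactly the intended one. The only cosmetic remark is that the first-exit-time choice of $t_n$ is not needed here---any $t_n$ with $d_n(t_n)=\eps_0$ suffices, which you already use.
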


Before proving the stability part of Theorem \ref{thm2}, 
we prepare one more lemma. 

\begin{lemma}\label{lem:ift}
There exist $\varepsilon>0$ and a $C^2$-function 
$\alpha: U_{\eps}(\vec \phi_{\omega})\to \R/2\pi \Z$ such that 
\begin{align} 
&\|\vec u-e^{i\alpha (\vec u)}\vec \phi_{\omega}\|_H
\le \|\vec u-e^{i\theta} \vec \phi_{\omega}\|_H, \quad 
\alpha (e^{i\theta} \vec u)=\alpha (\vec u)+\theta, 
\nonumber \\
&(\vec u, e^{i\alpha (\vec u)} i \vec \phi_{\omega})_H=0, \quad 
i \alpha'(\vec u)=-\frac{e^{i\alpha (\vec u)} \vec \phi_{\omega}}
{(\vec u, e^{i\alpha (\vec u)} \vec \phi_{\omega})_H}
\label{i-alpha}
\end{align}
for all $\vec u\in U_{\varepsilon}(\vec \phi_{\omega})$ and $\theta\in \R/2\pi \Z$. 
\end{lemma}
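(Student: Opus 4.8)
The statement to prove, Lemma \ref{lem:ift}, asserts the existence of a $C^2$ phase-selection function $\alpha$ on a tubular neighbourhood of the orbit $\{e^{i\theta}\vec\phi_\omega\}$, characterized by minimizing the $H$-distance over the gauge group, together with the orthogonality relation $(\vec u, e^{i\alpha(\vec u)} i\vec\phi_\omega)_H = 0$ and the formula for $i\alpha'(\vec u)$.

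\medskip

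The plan is to prove this by the implicit function theorem applied to the function
$$
F(\vec u, \theta) = (\vec u, e^{i\theta} i\vec\phi_\omega)_H,
$$
which is smooth in $(\vec u,\theta) \in X \times \R$. First I would check that $F(\vec\phi_\omega, 0) = (\vec\phi_\omega, i\vec\phi_\omega)_H = \reP \int i|\phi_{\omega,1}|^2 + i|\phi_{\omega,2}|^2\,dx = 0$, so $(\vec\phi_\omega,0)$ is a zero of $F$. Next I would compute $\partial_\theta F(\vec u,\theta) = (\vec u, e^{i\theta} i\cdot i\vec\phi_\omega)_H = -(\vec u, e^{i\theta}\vec\phi_\omega)_H$, and evaluate at $(\vec\phi_\omega, 0)$ to get $-\|\vec\phi_\omega\|_H^2 \ne 0$. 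Hence the implicit function theorem gives a neighbourhood of $\vec\phi_\omega$ in $X$ and a $C^2$ (indeed $C^\infty$) function $\theta = \alpha(\vec u)$ with $F(\vec u,\alpha(\vec u)) = 0$, i.e. $(\vec u, e^{i\alpha(\vec u)} i\vec\phi_\omega)_H = 0$. Because $F$ is $2\pi$-periodic in $\theta$, the solution is naturally valued in $\R/2\pi\Z$, and shrinking to a suitable $\eps$ makes $\alpha$ defined on all of $U_\eps(\vec\phi_\omega)$ by composing with the gauge action and using uniqueness of the local branch.

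\medskip

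Having constructed $\alpha$, I would verify the remaining properties in turn. The equivariance $\alpha(e^{i\theta}\vec u) = \alpha(\vec u) + \theta$ follows from the identity $F(e^{i\theta}\vec u, \sigma) = (e^{i\theta}\vec u, e^{i\sigma} i\vec\phi_\omega)_H = (\vec u, e^{i(\sigma-\theta)} i\vec\phi_\omega)_H = F(\vec u, \sigma - \theta)$ together with uniqueness of the implicit solution. The distance-minimizing property $\|\vec u - e^{i\alpha(\vec u)}\vec\phi_\omega\|_H \le \|\vec u - e^{i\theta}\vec\phi_\omega\|_H$ comes from expanding $g(\theta) := \|\vec u - e^{i\theta}\vec\phi_\omega\|_H^2 = \|\vec u\|_H^2 - 2(\vec u, e^{i\theta}\vec\phi_\omega)_H + \|\vec\phi_\omega\|_H^2$ and noting $g'(\theta) = -2(\vec u, e^{i\theta} i\vec\phi_\omega)_H$, which vanishes exactly when $F(\vec u,\theta)=0$; the second derivative $g''(\alpha(\vec u)) = 2(\vec u, e^{i\alpha(\vec u)}\vec\phi_\omega)_H$ is positive near $\vec\phi_\omega$ since $(\vec\phi_\omega,\vec\phi_\omega)_H = \|\vec\phi_\omega\|_H^2 > 0$ and the inner product is continuous, so $\alpha(\vec u)$ is a strict local minimum; shrinking $\eps$ ensures it is the global minimizer over $\theta \in \R/2\pi\Z$. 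Finally, the formula for $i\alpha'(\vec u)$ follows by differentiating the defining relation $(\vec u, e^{i\alpha(\vec u)} i\vec\phi_\omega)_H = 0$ with respect to $\vec u$: for $\vec h \in X$,
$$
(\vec h, e^{i\alpha(\vec u)} i\vec\phi_\omega)_H + \dual{\alpha'(\vec u)}{\vec h}\,(\vec u, e^{i\alpha(\vec u)} i\cdot i\vec\phi_\omega)_H = 0,
$$
which rearranges to $\dual{\alpha'(\vec u)}{\vec h} = (\vec h, e^{i\alpha(\vec u)} i\vec\phi_\omega)_H / (\vec u, e^{i\alpha(\vec u)}\vec\phi_\omega)_H$, i.e. $\alpha'(\vec u) = e^{i\alpha(\vec u)} i\vec\phi_\omega / (\vec u, e^{i\alpha(\vec u)}\vec\phi_\omega)_H$ as an element of $X^*$ identified via $H$, which is the claimed expression for $i\alpha'(\vec u)$ after multiplying by $i$ and using $i\cdot i = -1$.

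\medskip

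The main obstacle, such as it is, is not any single deep step but rather the bookkeeping needed to pass from the purely local statement produced by the implicit function theorem to the global statement over the whole tube $U_\eps(\vec\phi_\omega)$: one must choose $\eps$ small enough that (i) the implicit branch exists and is unique near each point of the orbit, (ii) the critical point $\alpha(\vec u)$ is the unique global minimizer of $g$ on $\R/2\pi\Z$ rather than merely a local one, and (iii) the denominator $(\vec u, e^{i\alpha(\vec u)}\vec\phi_\omega)_H$ stays bounded away from zero so that the formula for $\alpha'$ makes sense and $\alpha$ is genuinely $C^2$. All three are handled by continuity and compactness of $\R/2\pi\Z$, so the argument is standard; I would simply be careful to state the order of quantifiers correctly. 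This is the same construction used in \cite{GSS1} for single equations, adapted here to the vector-valued setting $X = H^1_{\even}(\R,\C)^2$, where nothing changes because the gauge action $e^{i\theta}$ acts diagonally and $\vec\phi_\omega \ne 0$.
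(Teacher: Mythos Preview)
Your argument is correct and is precisely the standard implicit-function-theorem construction; the paper does not give an independent proof but simply refers to Lemma~3.2 of \cite{GSS1}, which is exactly the argument you have written out.
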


\begin{proof}
See Lemma 3.2 of \cite{GSS1}. 
\end{proof}

\begin{proof}[Proof of Theorem \ref{thm2} (Stability part)]
Assume that $\gamma=\kappa_1>\kappa_2$. 

Let $\vec u\in U_{\eps}(\vec \phi_{\omega})$ with $Q(\vec u)=Q(\vec \phi_{\omega})$. 
Then, for $\alpha(\vec u)$ given in Lemma \ref{lem:ift}, we have 
\begin{equation} \label{alpha-eps}
\|\vec u-e^{i\alpha (\vec u)}\vec \phi_{\omega}\|_X 
\le \left(1+\frac{2 \|\vec \phi_{\omega}\|_X}{\|\vec \phi_{\omega}\|_H}\right)\eps. 
\end{equation}
Indeed, for $\beta (\vec u)\in \R$ such that 
\begin{align*} 
\|\vec u-e^{i\beta (\vec u)}\vec \phi_{\omega}\|_X
=\inf_{\theta\in \R} \|\vec u-e^{i\theta} \vec \phi_{\omega}\|_X<\eps, 
\end{align*}
we have 
\begin{align*}
&|e^{i\alpha (\vec u)}-e^{i\beta (\vec u)}|\|\vec \phi_{\omega}\|_H
\le \|e^{i\alpha (\vec u)}\vec \phi_{\omega}-\vec u\|_H
+\|\vec u-e^{i\beta (\vec u)}\vec \phi_{\omega}\|_H \\
&\le 2 \|\vec u-e^{i\beta (\vec u)}\vec \phi_{\omega}\|_H
\le 2 \|\vec u-e^{i\beta (\vec u)}\vec \phi_{\omega}\|_X<2\eps,
\end{align*}
and 
$\|\vec u-e^{i\alpha (\vec u)}\vec \phi_{\omega}\|_X 
\le \|\vec u-e^{i\beta (\vec u)}\vec \phi_{\omega}\|_X
+|e^{i\alpha (\vec u)}-e^{i\beta (\vec u)}|\|\vec \phi_{\omega}\|_X$, 
which implies \eqref{alpha-eps}. 

Let $\vec v=e^{-i\alpha (\vec u)}\vec u-\vec \phi_{\omega}$. 
Then, we have $(\vec v, i\vec \phi_{\omega})_H=0$, 
and we decompose $\vec v$ as 
$\vec v=\lambda \vec \psi_{\omega}+\mu \vec \phi_{\omega}+\vec w$ 
with $\lambda$, $\mu\in \R$ and $\vec w\in W$. 

Since $\|\vec \phi_{\omega}+\vec v\|_H^2=\|\vec u\|_H^2
=2Q(\vec u)=2Q(\vec \phi_{\omega})=\|\vec \phi_{\omega}\|_H^2$, 
it follows from Lemmas \ref{lem:QS1} and \ref{lem:pW} that 
\begin{align*}
E(\vec u)-E(\vec \phi_{\omega}) 
\ge \nu_0 \lambda^4+\frac{k_0}{2} \|\vec w\|_X^2
+o(\lambda^4+\|\vec w\|_X^2). 
\end{align*}
Here, we note that $k_0$ is the positive constant given in Lemma \ref{lem:pW}, 
and that $\nu_0>0$ 
by \eqref{nu0} and the assumption $\kappa_1>\kappa_2$. 

Moreover, by Lemma \ref{lem:Q1}, we have 
\begin{align*}
&\inf_{\theta\in \R} \|\vec u-e^{i\theta} \vec \phi_{\omega}\|_X
\le \|\vec v\|_X
\le |\lambda| \|\vec \psi_{\omega}\|_X+|\mu| \|\vec \phi_{\omega}\|_X+\|\vec w\|_X \\
&=|\lambda| \|\vec \psi_{\omega}\|_X+\|\vec w\|_X+O(\lambda^2+\|\vec w\|_X^2). 
\end{align*}

Thus, taking $\eps$ smaller if necessary, we have 
\begin{align*}
E(\vec u)-E(\vec \phi_{\omega}) 
\ge \frac{\nu_0}{2} \lambda^4+\frac{k_0}{4} \|\vec w\|_X^2
\ge C_1 \inf_{\theta\in \R} \|\vec u-e^{i\theta} \vec \phi_{\omega}\|_X^4
\end{align*}
for some $C_1>0$. 

Hence, the stability of $e^{i\omega t} \vec \phi_{\omega}$ 
follows from Proposition \ref{prop4}. 
\end{proof}

In the rest of this section, 
we study the instability of $e^{i\omega t} \vec \phi_{\omega}$ 
for the case $\gamma=\kappa_1<\kappa_2$. 
We follow the argument used in \cite{CO3, oht11}. 

For $\vec u\in U_{\eps}(\vec \phi_{\omega})$, we define 
\begin{align}
&M(\vec u)=e^{-i\alpha (\vec u)} \vec u, \quad 
A(\vec u)=(i M(\vec u), \vec \psi_{\omega})_H, \label{eq:MA} \\
&q(\vec u)=e^{i\alpha (\vec u)} \vec \psi_{\omega}
+(M(\vec u), \vec \psi_{\omega})_H\, i\alpha'(\vec u), \nonumber \\
&P(\vec u)=\dual{E'(\vec u)}{q(\vec u)}. \nonumber 
\end{align}

Then, we have the following lemmas (see \cite{GSS1}). 

\begin{lemma}\label{lem:Aq}
For $\vec u\in U_{\eps} (\vec \phi_{\omega})$, 

\noindent ${\bf ({\rm 1})}$ \hspace{1mm}
$A(e^{i\theta} \vec u)=A(\vec u)$, \hspace{1mm} 
$q(e^{i\theta} \vec u)=e^{i\theta} q(\vec u)$ 
for all $\theta \in \R$. 

\noindent ${\bf ({\rm 2})}$ \hspace{1mm}
$\dual{A'(\vec u)}{\vec w}=(q(\vec u),i \vec w)_H$ for $\vec w\in X$. 

\noindent ${\bf ({\rm 3})}$ \hspace{1mm}
$q(\vec \phi_{\omega})=\vec \psi_{\omega}$, \hspace{1mm} 
$\dual{Q'(\vec u)}{q(\vec u)}=0$. 
\end{lemma}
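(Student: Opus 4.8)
The plan is to verify the three items directly from the definitions in \eqref{eq:MA} together with the properties of the modulation function $\alpha$ established in Lemma \ref{lem:ift}. The only nontrivial analytic input is the formula for $\alpha'(\vec u)$ in \eqref{i-alpha}; everything else is bookkeeping with the real inner product on $H$ and the gauge action $\vec u\mapsto e^{i\theta}\vec u$.

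For item (1), I would start from the equivariance $\alpha(e^{i\theta}\vec u)=\alpha(\vec u)+\theta$ of Lemma \ref{lem:ift}, which gives $M(e^{i\theta}\vec u)=e^{-i\alpha(\vec u)-i\theta}e^{i\theta}\vec u=M(\vec u)$, i.e. $M$ is gauge-invariant. Then $A(e^{i\theta}\vec u)=(iM(e^{i\theta}\vec u),\vec\psi_\omega)_H=(iM(\vec u),\vec\psi_\omega)_H=A(\vec u)$ at once. For $q$, differentiating $\alpha(e^{i\theta}\vec u)=\alpha(\vec u)+\theta$ in $\vec u$ shows $\alpha'(e^{i\theta}\vec u)e^{i\theta}=\alpha'(\vec u)$, hence $i\alpha'(e^{i\theta}\vec u)=e^{i\theta}(i\alpha'(\vec u))$ when both sides are evaluated appropriately; combining this with $M(e^{i\theta}\vec u)=M(\vec u)$ and $e^{i\alpha(e^{i\theta}\vec u)}=e^{i\theta}e^{i\alpha(\vec u)}$ in the definition of $q$ yields $q(e^{i\theta}\vec u)=e^{i\theta}q(\vec u)$.

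For item (2), I would differentiate $A(\vec u)=(iM(\vec u),\vec\psi_\omega)_H=(ie^{-i\alpha(\vec u)}\vec u,\vec\psi_\omega)_H$ with respect to $\vec u$ in the direction $\vec w\in X$. The product rule produces two terms: one from differentiating $\vec u$, giving $(ie^{-i\alpha(\vec u)}\vec w,\vec\psi_\omega)_H=(i\vec w,e^{i\alpha(\vec u)}\vec\psi_\omega)_H$, and one from differentiating $e^{-i\alpha(\vec u)}$, giving $-\dual{\alpha'(\vec u)}{\vec w}(ie^{-i\alpha(\vec u)}i\vec u,\vec\psi_\omega)_H=\dual{\alpha'(\vec u)}{\vec w}(iM(\vec u),i\vec\psi_\omega)_H$. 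Writing this second term as $(i\vec w,\,(M(\vec u),\vec\psi_\omega)_H\,i\alpha'(\vec u))_H$ — using \eqref{i-alpha} to rewrite $(M(\vec u),\vec\psi_\omega)_H\,i\alpha'(\vec u)$ and the real-bilinearity together with the identity $(ia,ib)_H=(a,b)_H$ — and adding the two contributions gives exactly $(i\vec w, q(\vec u))_H=(q(\vec u),i\vec w)_H$. The bookkeeping with the factors of $i$ and the sign conventions in the real inner product is where one must be careful, but no new idea is needed.

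For item (3), at $\vec u=\vec\phi_\omega$ we have $\alpha(\vec\phi_\omega)=0$ (since $\vec\phi_\omega$ is the base point) and $M(\vec\phi_\omega)=\vec\phi_\omega$, while the second formula in \eqref{i-alpha} with $(\vec\phi_\omega,\vec\phi_\omega)_H=\|\vec\phi_\omega\|_H^2$ and the orthogonality $(\vec\phi_\omega,\vec\psi_\omega)_H=0$ make the second term in $q(\vec\phi_\omega)$ vanish, leaving $q(\vec\phi_\omega)=\vec\psi_\omega$. For the identity $\dual{Q'(\vec u)}{q(\vec u)}=0$, I would use $Q'(\vec u)=\vec u$ (since $Q(\vec u)=\tfrac12\|\vec u\|_H^2$) and expand $\dual{\vec u}{q(\vec u)}=(\vec u,e^{i\alpha(\vec u)}\vec\psi_\omega)_H+(M(\vec u),\vec\psi_\omega)_H\dual{\vec u}{i\alpha'(\vec u)}$; the first term equals $(M(\vec u),\vec\psi_\omega)_H$, and by \eqref{i-alpha} one has $\dual{\vec u}{i\alpha'(\vec u)}=-(\vec u,e^{i\alpha(\vec u)}\vec\phi_\omega)_H/(\vec u,e^{i\alpha(\vec u)}\vec\phi_\omega)_H=-1$, so the two terms cancel. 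I do not expect any genuine obstacle here; the main care-point throughout is keeping track of the real-linear (as opposed to complex-linear) structure and the consistent use of \eqref{i-alpha}.
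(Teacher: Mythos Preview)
Your proposal is correct. The paper does not actually give its own proof of this lemma; it simply states the result and refers the reader to \cite{GSS1}. Your direct verification from the definitions \eqref{eq:MA} and the properties in Lemma~\ref{lem:ift} is exactly the standard computation one would carry out, and all three items go through as you describe.

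A couple of minor remarks on your write-up: in item~(1), rather than differentiating the equivariance relation to obtain the transformation law for $\alpha'$, it is slightly cleaner (and avoids any notational ambiguity between $\alpha'$ as a functional versus its Riesz representative) to read off $i\alpha'(e^{i\theta}\vec u)=e^{i\theta}\,i\alpha'(\vec u)$ directly from the explicit formula in \eqref{i-alpha}. In item~(2), the reference to \eqref{i-alpha} in the rewriting of the second term is not actually needed there; the identity $(ia,ib)_H=(a,b)_H$ together with real-bilinearity already suffices, as you in fact note. These are cosmetic points; the argument is sound.
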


\begin{lemma}\label{lem:AP}
Let $I$ be an interval of $\R$. 
Let $\vec u\in C(I,X)$ be a solution of \eqref{nls}, 
and assume that $\vec u(t)\in U_{\varepsilon}(\vec \phi_{\omega})$ for all $t\in I$. 
Then,
$$\frac{d}{dt}A(\vec u(t))=P(\vec u(t))
\quad \mbox{for all} \hspace{2mm} t\in I.$$
\end{lemma}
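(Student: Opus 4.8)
The plan is to verify the identity $\frac{d}{dt}A(\vec u(t))=P(\vec u(t))$ by direct differentiation, using the chain rule together with the fact that $\vec u(t)$ solves the Hamiltonian equation $i\pd_t\vec u=E'(\vec u)$, equivalently $\pd_t\vec u=-iE'(\vec u)$. First I would recall from \eqref{eq:MA} that $A(\vec u)=(iM(\vec u),\vec\psi_{\omega})_H=(ie^{-i\alpha(\vec u)}\vec u,\vec\psi_{\omega})_H$, so that $A$ is a composition of the smooth maps $\vec u\mapsto\alpha(\vec u)$ (which is $C^2$ by Lemma \ref{lem:ift}) and $\vec u\mapsto\vec u$. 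Differentiating along the flow gives $\frac{d}{dt}A(\vec u(t))=\dual{A'(\vec u(t))}{\pd_t\vec u(t)}$, and by Lemma \ref{lem:Aq}(2) this equals $(q(\vec u),i\pd_t\vec u)_H$. Since $i\pd_t\vec u=E'(\vec u)$, we have $(q(\vec u),i\pd_t\vec u)_H=(q(\vec u),E'(\vec u))_H=\dual{E'(\vec u)}{q(\vec u)}=P(\vec u)$, which is exactly the claim.

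The one point that requires a little care is the identification of the real $H$-inner product $(q(\vec u),E'(\vec u))_H$ with the duality pairing $\dual{E'(\vec u)}{q(\vec u)}$; this is simply the statement that $E'(\vec u)$, as an element of $X^*$, is represented through the $H$-inner product on the relevant subspace, which is the standard convention adopted throughout the paper (e.g. in the Hamiltonian form $i\pd_t\vec u=E'(\vec u)$ and in the quadratic-form computations of Section \ref{sect2}). I would also note that the regularity needed to justify the chain rule — namely that $t\mapsto A(\vec u(t))$ is $C^1$ — follows from $\vec u\in C(I,X)$, the global well-posedness which gives $\vec u\in C^1(I,X^*)$ (or $C^1(I,H^{-1})$), and the $C^2$ regularity of $\alpha$ on $U_{\eps}(\vec\phi_{\omega})$; the pairing $\dual{A'(\vec u(t))}{\pd_t\vec u(t)}$ then makes sense because $A'(\vec u)=(q(\vec u),i\,\cdot\,)_H\in X^*$ with $q(\vec u)\in X$, so it extends to act on $\pd_t\vec u\in X^*$ when paired against the smooth element $q(\vec u)\in X$.

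The main obstacle, such as it is, is not conceptual but bookkeeping: one must keep track of the fact that $A$ depends on $\vec u$ both explicitly and through $\alpha(\vec u)$, and check that Lemma \ref{lem:Aq}(2) already packages this total derivative correctly, so that no extra terms involving $\alpha'(\vec u)$ appear beyond those absorbed into the definition of $q(\vec u)$ in \eqref{eq:MA}. Once that is granted, the computation is a two-line application of the chain rule and the Hamiltonian structure. I expect the proof to read essentially as: ``By Lemma \ref{lem:Aq}(2) and the fact that $\vec u$ solves \eqref{nls}, $\frac{d}{dt}A(\vec u(t))=\dual{A'(\vec u(t))}{\pd_t\vec u(t)}=(q(\vec u(t)),i\pd_t\vec u(t))_H=\dual{E'(\vec u(t))}{q(\vec u(t))}=P(\vec u(t))$,'' followed by the remark that the differentiation is justified by $\vec u\in C(I,X)$ together with the regularity of $\alpha$.
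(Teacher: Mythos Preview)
Your argument is correct and is exactly the standard computation: apply the chain rule, invoke Lemma~\ref{lem:Aq}(2), and use the Hamiltonian form $i\pd_t\vec u=E'(\vec u)$ of \eqref{nls}. The paper itself does not give a proof of this lemma but simply refers to \cite{GSS1}; what you have written is precisely the argument behind that citation, so there is nothing to compare.
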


By Lemma \ref{lem:Aq} and \eqref{i-alpha}, we have 
\begin{align}
&P(\vec u)=\dual{S_{\omega}'(\vec u)}{q(\vec u)} 
\nonumber \\
&=\dual{S_{\omega}' \left(M(\vec u)\right)}{\vec \psi_{\omega}}
-\frac{(M(\vec u), \vec \psi_{\omega})_H}{(M(\vec u), \vec \phi_{\omega})_H}
\dual{S_{\omega}' \left(M(\vec u)\right)}{\vec \phi_{\omega}}. 
\label{repP}
\end{align}

We prove the following. 

\begin{proposition}\label{prop6}
Let $\gamma=\kappa_1<\kappa_2$. 
Then, there exists a constant $\eps_0>0$ such that 
\begin{align*}
E(\vec \phi_{\omega}) 
\le E(\vec u)-\frac{(M(\vec u),\vec \psi_{\omega})_H}{2 \|\vec \psi_{\omega}\|_H^2} P(\vec u)
\end{align*}
for all $\vec u\in U_{\eps_0}(\vec \phi_{\omega})$ satisfying 
$Q(\vec u)=Q(\vec \phi_{\omega})$. 
\end{proposition}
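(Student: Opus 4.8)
The plan is to follow the Lyapunov‑function method of \cite{CO3,oht11} and to show that the functional
\[
\mathcal N(\vec u):=E(\vec u)-\frac{(M(\vec u),\vec\psi_\omega)_H}{2\,\|\vec\psi_\omega\|_H^2}\,P(\vec u)-E(\vec\phi_\omega)
\]
is nonnegative on $\{\vec u\in U_{\eps_0}(\vec\phi_\omega):Q(\vec u)=Q(\vec\phi_\omega)\}$ for $\eps_0$ small. First I would fix such a $\vec u$, take $\alpha=\alpha(\vec u)$ from Lemma \ref{lem:ift} so that $M(\vec u)=e^{-i\alpha}\vec u=\vec\phi_\omega+\vec v$ with $(\vec v,i\vec\phi_\omega)_H=0$, and decompose $\vec v=\lambda\vec\psi_\omega+\mu\vec\phi_\omega+\vec w$ with $\lambda,\mu\in\R$ and $\vec w\in W$; by \eqref{alpha-eps} all of $|\lambda|,|\mu|,\|\vec w\|_X$ are $O(\eps_0)$. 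Since $\vec\phi_\omega,\vec\psi_\omega,i\vec\phi_\omega,\vec w$ are mutually orthogonal in $H$ and $\|\vec\psi_\omega\|_H=\|\vec\phi_\omega\|_H$, one gets $(M(\vec u),\vec\psi_\omega)_H=\lambda\|\vec\psi_\omega\|_H^2$ and $(M(\vec u),\vec\phi_\omega)_H=(1+\mu)\|\vec\phi_\omega\|_H^2$, so the weight in $\mathcal N$ equals $\lambda/2$. As $Q(\vec u)=Q(\vec\phi_\omega)$ forces $\|\vec\phi_\omega+\vec v\|_H^2=\|\vec\phi_\omega\|_H^2$, Lemma \ref{lem:Q1} gives $\mu=-\tfrac12\lambda^2+O(\lambda^4+\|\vec w\|_X^2)$, and Lemma \ref{lem:QS1} gives $E(\vec u)-E(\vec\phi_\omega)=\nu_0\lambda^4+\tfrac12\dual{S_\omega''(\vec\phi_\omega)\vec w}{\vec w}+o(\lambda^4+\|\vec w\|_X^2)$.

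The heart of the argument is the expansion of $P(\vec u)$. Starting from \eqref{repP} I would Taylor‑expand $\dual{S_\omega'(M(\vec u))}{\vec\psi_\omega}$ and $\dual{S_\omega'(M(\vec u))}{\vec\phi_\omega}$ about $\vec\phi_\omega+\lambda\vec\psi_\omega$, treating $\vec z:=\mu\vec\phi_\omega+\vec w=O(\lambda^2+\|\vec w\|_X)$ as the perturbation, and use: $S_\omega''(\vec\phi_\omega)\vec\psi_\omega=0$; Lemma \ref{lem:A3}, which gives $\dual{S_\omega'(\vec\phi_\omega+\lambda\vec\psi_\omega)}{\vec\psi_\omega}=\tfrac{\nu_1}{3!}\lambda^3$; Lemma \ref{lem:S0}, which gives $\dual{S_\omega'(\vec\phi_\omega+\lambda\vec\psi_\omega)}{\vec\phi_\omega}=\tfrac12\lambda^2\dual{S_\omega''(\vec\phi_\omega)\vec\phi_\omega}{\vec\phi_\omega}+O(\lambda^3)$; the identity $S_\omega^{(3)}(\vec\phi_\omega)(\vec\psi_\omega,\vec\psi_\omega)=S_\omega''(\vec\phi_\omega)\vec\phi_\omega$ from \eqref{S2S3}; and $(M(\vec u),\vec\psi_\omega)_H/(M(\vec u),\vec\phi_\omega)_H=\lambda/(1+\mu)=\lambda+O(\lambda^3)$. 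Two exact cancellations then occur between the two summands of \eqref{repP}: the terms $\pm\lambda\mu\dual{S_\omega''(\vec\phi_\omega)\vec\phi_\omega}{\vec\phi_\omega}$ cancel, and — here \eqref{S2S3} is essential — so do the terms linear in $\vec w$, namely $\pm\lambda\dual{S_\omega''(\vec\phi_\omega)\vec\phi_\omega}{\vec w}$. Using the bound $\mu=O(\lambda^2)$ from Lemma \ref{lem:Q1} to control the remainders, the surviving cubic‑in‑$\lambda$ part is $\bigl(\tfrac{\nu_1}{6}-\tfrac12\dual{S_\omega''(\vec\phi_\omega)\vec\phi_\omega}{\vec\phi_\omega}\bigr)\lambda^3$, which equals $4\nu_0\lambda^3$ by the definition of $\nu_0$ and \eqref{S2S3} (giving $\nu_0=-\tfrac18\dual{S_\omega''(\vec\phi_\omega)\vec\phi_\omega}{\vec\phi_\omega}+\tfrac1{24}\nu_1$). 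Hence $P(\vec u)=4\nu_0\lambda^3+O(\lambda^4+\lambda^2\|\vec w\|_X+\|\vec w\|_X^2)$, and therefore
\[
-\frac{\lambda}{2}\,P(\vec u)=-2\nu_0\lambda^4+O(\lambda^3\|\vec w\|_X)+o(\lambda^4+\|\vec w\|_X^2).
\]

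Combining the two displays yields
\[
\mathcal N(\vec u)=-\nu_0\lambda^4+\tfrac12\dual{S_\omega''(\vec\phi_\omega)\vec w}{\vec w}+O(\lambda^3\|\vec w\|_X)+o(\lambda^4+\|\vec w\|_X^2).
\]
By \eqref{nu0} and $\kappa_2>\kappa_1$ we have $-\nu_0=\tfrac{\kappa_2-\kappa_1}{4\kappa_1^2}\|\varphi_\omega\|_{L^4}^4>0$, while by Lemma \ref{lem:pW} we have $\dual{S_\omega''(\vec\phi_\omega)\vec w}{\vec w}\ge k_0\|\vec w\|_X^2$; estimating $\lambda^3\|\vec w\|_X\le\tfrac{|\lambda|}{2}(\lambda^4+\|\vec w\|_X^2)\le\tfrac{\eps_0}{2}(\lambda^4+\|\vec w\|_X^2)$ by the arithmetic--geometric mean inequality and choosing $\eps_0$ small enough to absorb this term and the $o(\cdot)$ term into the two positive leading terms, one gets $\mathcal N(\vec u)\ge\tfrac{-\nu_0}{2}\lambda^4+\tfrac{k_0}{4}\|\vec w\|_X^2\ge0$, which is the assertion. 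The main obstacle is the bookkeeping in the expansion of $P(\vec u)$: one has to carry enough terms to verify that everything of order $\lambda^3$ apart from $4\nu_0\lambda^3$, together with all the terms linear in $\vec w$, cancels between the two pieces of \eqref{repP}, which hinges on the precise relations in \eqref{S2S3}; once this is done, the only remainder not manifestly of lower order is the cross term $O(\lambda^3\|\vec w\|_X)$, handled by Young's inequality as above.
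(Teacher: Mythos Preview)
Your proposal is correct and follows essentially the same route as the paper: the same decomposition $\vec v=\lambda\vec\psi_\omega+\mu\vec\phi_\omega+\vec w$, the same use of Lemma~\ref{lem:QS1} for $E(\vec u)-E(\vec\phi_\omega)$, the same representation \eqref{repP} for $P$, the same key cancellation via \eqref{S2S3}, and the same conclusion $\lambda P(\vec u)=4\nu_0\lambda^4+o(\lambda^4+\|\vec w\|_X^2)$. The only cosmetic difference is that the paper isolates the expansion of $\lambda P(\vec u)$ into three preparatory lemmas (Lemmas~\ref{lem:S21}, \ref{lem:S22}, \ref{lem:QS2}), whereas you carry out the same Taylor bookkeeping inline; your residual $O(\lambda^3\|\vec w\|_X)$, handled by Young's inequality, is exactly what the paper absorbs directly into $o(\lambda^4+\|\vec w\|_X^2)$.
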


For the proof of Proposition \ref{prop6}, 
we prove several lemmas. 

\begin{lemma}\label{lem:S21}
For $\lambda\in \R$ and $\vec z\in X$, 
\begin{align*}
&\lambda \dual{S_{\omega}'(\vec \phi_{\omega}+\lambda \vec \psi_{\omega}+\vec z)}
{\vec \psi_{\omega}} \\
&=\dfrac{\nu_1}{3!} \lambda^4
+\lambda^2 \dual{S_{\omega}^{(3)}(\vec \phi_{\omega})(\vec \psi_{\omega},\vec \psi_{\omega})}{\vec z}
+o(\lambda^4+\|\vec z\|_X^2). 
\end{align*}
\end{lemma}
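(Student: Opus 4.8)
The plan is to expand the left-hand side by Taylor's theorem around $\vec\phi_\omega$, applied to the map $\vec v\mapsto\dual{S_\omega'(\vec\phi_\omega+\vec v)}{\vec\psi_\omega}$ with $\vec v=\lambda\vec\psi_\omega+\vec z$, and then collect terms by their homogeneity in $(\lambda,\vec z)$. First I would write
\[
\dual{S_\omega'(\vec\phi_\omega+\lambda\vec\psi_\omega+\vec z)}{\vec\psi_\omega}
=\sum_{k=0}^{3}\frac{1}{k!}\dual{S_\omega^{(k+1)}(\vec\phi_\omega)(\lambda\vec\psi_\omega+\vec z)^{k}}{\vec\psi_\omega}
+o(\|\lambda\vec\psi_\omega+\vec z\|_X^{3}),
\]
using that $S_\omega$ is a polynomial of degree $4$ on $X$ so the expansion is exact up to order $3$ with the quartic term contributing an $O((|\lambda|+\|\vec z\|_X)^4)$ remainder that is absorbed into $o(\lambda^4+\|\vec z\|_X^2)$ after multiplying by $\lambda$ (note $|\lambda|\cdot\|\vec z\|_X^3=o(\lambda^4+\|\vec z\|_X^2)$ and $\lambda^5=o(\lambda^4)$).

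Next I would evaluate each term using the vanishing relations established earlier: $S_\omega'(\vec\phi_\omega)=0$ kills the $k=0$ term; $S_\omega''(\vec\phi_\omega)\vec\psi_\omega=0$ (from the display after \eqref{S2S3}) kills the $\lambda$-part of the $k=1$ term, leaving $\dual{S_\omega''(\vec\phi_\omega)\vec z}{\vec\psi_\omega}=\dual{S_\omega''(\vec\phi_\omega)\vec\psi_\omega}{\vec z}=0$ as well, so the entire $k=1$ term vanishes. For $k=2$ the expansion of $(\lambda\vec\psi_\omega+\vec z)^2$ gives three pieces: the $\lambda^2$ piece yields $\tfrac{\lambda^2}{2}\dual{S_\omega^{(3)}(\vec\phi_\omega)(\vec\psi_\omega,\vec\psi_\omega)}{\vec\psi_\omega}=0$ by the identity displayed just before the definition of $\nu_1$; the cross piece yields $\lambda\dual{S_\omega^{(3)}(\vec\phi_\omega)(\vec\psi_\omega,\vec z)}{\vec\psi_\omega}=\lambda\dual{S_\omega^{(3)}(\vec\phi_\omega)(\vec\psi_\omega,\vec\psi_\omega)}{\vec z}$; and the $\vec z^2$ piece is $O(\|\vec z\|_X^2)$ but, crucially, it has no $\vec\psi_\omega$-decay, so after multiplying by the outer $\lambda$ it becomes $O(|\lambda|\|\vec z\|_X^2)=o(\lambda^4+\|\vec z\|_X^2)$ by Young's inequality. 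For $k=3$ only the pure $\lambda^3$ piece survives to the stated order: $\tfrac{\lambda^3}{3!}\dual{S_\omega^{(4)}(\vec\phi_\omega)(\vec\psi_\omega,\vec\psi_\omega,\vec\psi_\omega)}{\vec\psi_\omega}=\tfrac{\nu_1}{3!}\lambda^3$, while the mixed pieces of order $\lambda^2\|\vec z\|_X$ or higher are again $o$ of the target after the outer multiplication by $\lambda$.

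Finally, multiplying the surviving terms $\dfrac{\nu_1}{3!}\lambda^3+\lambda\dual{S_\omega^{(3)}(\vec\phi_\omega)(\vec\psi_\omega,\vec\psi_\omega)}{\vec z}$ by $\lambda$ gives exactly $\dfrac{\nu_1}{3!}\lambda^4+\lambda^2\dual{S_\omega^{(3)}(\vec\phi_\omega)(\vec\psi_\omega,\vec\psi_\omega)}{\vec z}$, and all remainder terms have been shown to be $o(\lambda^4+\|\vec z\|_X^2)$, which is the claim. The only real bookkeeping obstacle is making the error estimates honest: one must confirm that every cross term of the form $|\lambda|^a\|\vec z\|_X^b$ with $a+b\ge 3$ and $b\ge 1$ (after the outer factor $\lambda$, so $a+b\ge 4$, $b\ge 1$) is dominated by $\lambda^4+\|\vec z\|_X^2$, which follows from Young's inequality since then $a<4$ forces $b\ge 1$ and $\|\vec z\|_X^b$ with $b\le 2$ times $|\lambda|^a$ with $a\ge 2$ is controlled; the case $b\ge 3$ is trivially $o(\|\vec z\|_X^2)$. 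I expect this estimate-chasing, rather than any conceptual point, to be the main (and only) difficulty, and it is entirely parallel to the argument already carried out in Lemma \ref{lem:S1}.
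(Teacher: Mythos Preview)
Your proof is correct and takes essentially the same approach as the paper: both are Taylor expansions that exploit the vanishing relations $S_\omega'(\vec\phi_\omega)=0$, $S_\omega''(\vec\phi_\omega)\vec\psi_\omega=0$, and $\dual{S_\omega^{(3)}(\vec\phi_\omega)(\vec\psi_\omega,\vec\psi_\omega)}{\vec\psi_\omega}=0$, followed by the same Young-type remainder estimates. The only difference is organizational: the paper expands first in $\vec z$ about $\vec\phi_\omega+\lambda\vec\psi_\omega$ (invoking Lemma~\ref{lem:A3} for the $\vec z=0$ piece) and then in $\lambda$, whereas you expand directly in $\lambda\vec\psi_\omega+\vec z$ about $\vec\phi_\omega$ in a single step.
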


\begin{proof}
By Taylor's expansion, we have 
\begin{align*}
&\lambda \dual{S_{\omega}'(\vec \phi_{\omega}+\lambda \vec \psi_{\omega}
+\vec z)}{\vec \psi_{\omega}} \\
&=\lambda \dual{S_{\omega}'(\vec \phi_{\omega}+\lambda \vec \psi_{\omega})}{\vec \psi_{\omega}}
+\lambda \dual{S_{\omega}''(\vec \phi_{\omega}+\lambda \vec \psi_{\omega}) \vec z}{\vec \psi_{\omega}}
+O(\lambda \|\vec z\|_X^2). 
\end{align*}
Here, by Lemma \ref{lem:A3}, we have 
$\lambda \dual{S_{\omega}'(\vec \phi_{\omega}+\lambda \vec \psi_{\omega})}{\vec \psi_{\omega}}
=\dfrac{\nu_1}{3!} \lambda^4$. 

Next, since $S_{\omega}''(\vec \phi_{\omega})\vec \psi_{\omega}=0$, we have 
\begin{align*}
&\lambda \dual{S_{\omega}''(\vec \phi_{\omega}+\lambda \vec \psi_{\omega}) \vec z}
{\vec \psi_{\omega}} \\
&=\lambda \dual{S_{\omega}''(\vec \phi_{\omega}) \vec z}{\vec \psi_{\omega}}
+\lambda^2 \dual{S_{\omega}^{(3)}(\vec \phi_{\omega})(\vec \psi_{\omega}, \vec z)}{\vec \psi_{\omega}}
+o(\lambda^2 \|\vec z\|_X) \\
&=\lambda^2 \dual{S_{\omega}^{(3)}(\vec \phi_{\omega})(\vec \psi_{\omega},\vec \psi_{\omega})}{\vec z}
+o(\lambda^2 \|\vec z\|_X). 
\end{align*}

Thus, we obtain the desired result. 
\end{proof}

\begin{lemma}\label{lem:S22}
For $\lambda\in \R$ and $\vec z\in X$, 
\begin{align*}
&\lambda^2 \dual{S_{\omega}'(\vec \phi_{\omega}+\lambda \vec \psi_{\omega}+\vec z)}
{\vec \phi_{\omega}} \\
&=\frac{\lambda^4}{2} \dual{S_{\omega}^{(3)}(\vec \phi_{\omega}) 
(\vec \psi_{\omega},\vec \psi_{\omega})}{\vec \phi_{\omega}}
+\lambda^2 \dual{S_{\omega}''(\vec \phi_{\omega})\vec \phi_{\omega}}{\vec z} 
+o(\lambda^4+\|\vec z\|_X^2). 
\end{align*}
\end{lemma}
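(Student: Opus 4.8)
\textbf{Proof proposal for Lemma \ref{lem:S22}.}
The plan is to mirror the proof of Lemma \ref{lem:S21}, but pair the Taylor expansion against $\vec \phi_{\omega}$ (rather than $\vec \psi_{\omega}$) and carry one more order of accuracy, since the whole expression is multiplied by $\lambda^2$ and we are tracking errors of size $\lambda^4 + \|\vec z\|_X^2$. First I would write, by Taylor's expansion in the $\vec z$-direction at the base point $\vec \phi_{\omega} + \lambda \vec \psi_{\omega}$,
\begin{align*}
\lambda^2 \dual{S_{\omega}'(\vec \phi_{\omega}+\lambda \vec \psi_{\omega}+\vec z)}{\vec \phi_{\omega}}
&=\lambda^2 \dual{S_{\omega}'(\vec \phi_{\omega}+\lambda \vec \psi_{\omega})}{\vec \phi_{\omega}}
+\lambda^2 \dual{S_{\omega}''(\vec \phi_{\omega}+\lambda \vec \psi_{\omega})\vec z}{\vec \phi_{\omega}} \\
&\hspace{5mm}+O(\lambda^2\|\vec z\|_X^2).
\end{align*}

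For the first term on the right, I would expand $S_{\omega}'(\vec \phi_{\omega}+\lambda \vec \psi_{\omega})$ about $\vec \phi_{\omega}$: using $S_{\omega}'(\vec \phi_{\omega})=0$, $S_{\omega}''(\vec \phi_{\omega})\vec \psi_{\omega}=0$ (both recorded in \eqref{S2S3}), one gets $S_{\omega}'(\vec \phi_{\omega}+\lambda \vec \psi_{\omega})=\tfrac{\lambda^2}{2}S_{\omega}^{(3)}(\vec \phi_{\omega})(\vec \psi_{\omega},\vec \psi_{\omega})+o(\lambda^2)$, which is exactly Lemma \ref{lem:S0}; pairing with $\vec \phi_{\omega}$ and multiplying by $\lambda^2$ yields $\tfrac{\lambda^4}{2}\dual{S_{\omega}^{(3)}(\vec \phi_{\omega})(\vec \psi_{\omega},\vec \psi_{\omega})}{\vec \phi_{\omega}}+o(\lambda^4)$. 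For the second term, since $S_{\omega}''$ is continuous (indeed smooth) I would write $\dual{S_{\omega}''(\vec \phi_{\omega}+\lambda \vec \psi_{\omega})\vec z}{\vec \phi_{\omega}}=\dual{S_{\omega}''(\vec \phi_{\omega})\vec z}{\vec \phi_{\omega}}+O(\lambda\|\vec z\|_X)$, so after multiplying by $\lambda^2$ this contributes $\lambda^2\dual{S_{\omega}''(\vec \phi_{\omega})\vec \phi_{\omega}}{\vec z}+O(\lambda^3\|\vec z\|_X)$, using the symmetry of $S_{\omega}''(\vec \phi_{\omega})$ to move it onto $\vec \phi_{\omega}$. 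Finally I would absorb the cross terms $O(\lambda^3\|\vec z\|_X)$, $O(\lambda^2\|\vec z\|_X^2)$ and $o(\lambda^4)$ into $o(\lambda^4+\|\vec z\|_X^2)$ via Young's inequality (e.g. $\lambda^3\|\vec z\|_X \le \lambda^4 + \lambda^2\|\vec z\|_X^2$ and $\lambda^2\|\vec z\|_X^2 = o(\lambda^4+\|\vec z\|_X^2)$), giving the stated identity.

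I do not expect any genuine obstacle here: the only point requiring a little care is bookkeeping of the mixed error terms, making sure that every leftover is $o(\lambda^4+\|\vec z\|_X^2)$ rather than merely $O$ of something comparable — in particular that the $\lambda^2 \cdot o(\lambda^2)$ coming from Lemma \ref{lem:S0} is genuinely $o(\lambda^4)$, and that no term of exact order $\lambda^2\|\vec z\|_X$ or $\lambda^3$ survives uncancelled (the first is killed because $S_{\omega}''(\vec \phi_{\omega})\vec \psi_{\omega}=0$, exactly as in Lemma \ref{lem:S21}; the second because $S_{\omega}'(\vec \phi_{\omega})=0$). This is routine once the vanishing relations in \eqref{S2S3} are invoked at the right moments.
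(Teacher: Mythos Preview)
Your proposal is correct and follows essentially the same approach as the paper: expand $S_{\omega}'$ in the $\vec z$-direction at the base point $\vec\phi_{\omega}+\lambda\vec\psi_{\omega}$, invoke Lemma~\ref{lem:S0} for the first term, use continuity of $S_{\omega}''$ for the second, and then absorb the mixed remainders into $o(\lambda^4+\|\vec z\|_X^2)$. The only cosmetic difference is that the paper postpones the multiplication by $\lambda^2$ to the very last step, whereas you carry it through from the start; the bookkeeping is otherwise identical.
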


\begin{proof}
By Taylor's expansion, we have 
\begin{align*}
&\dual{S_{\omega}'(\vec \phi_{\omega}+\lambda \vec \psi_{\omega}+\vec z)}
{\vec \phi_{\omega}} \\
&=\dual{S_{\omega}'(\vec \phi_{\omega}+\lambda \vec \psi_{\omega})}
{\vec \phi_{\omega}}
+\dual{S_{\omega}''(\vec \phi_{\omega}+\lambda \vec \psi_{\omega}) \vec z}
{\vec \phi_{\omega}}
+O(\|\vec z\|_X^2). 
\end{align*}
Here, it follows from Lemma \ref{lem:S0} that 
\begin{align*}
\dual{S_{\omega}'(\vec \phi_{\omega}+\lambda \vec \psi_{\omega})}
{\vec \phi_{\omega}}
=\frac{\lambda^2}{2} \dual{S_{\omega}^{(3)}(\vec \phi_{\omega}) 
(\vec \psi_{\omega},\vec \psi_{\omega})}{\vec \phi_{\omega}}
+o(\lambda^2). 
\end{align*}
Moreover, we have 
$\dual{S_{\omega}''(\vec \phi_{\omega}+\lambda \vec \psi_{\omega}) \vec z}{\vec \phi_{\omega}}
=\dual{S_{\omega}''(\vec \phi_{\omega}) \vec z}{\vec \phi_{\omega}}
+O(\lambda \|\vec z\|_X)$. 

Thus, we have 
\begin{align*}
&\dual{S_{\omega}'(\vec \phi_{\omega}+\lambda \vec \psi_{\omega}+\vec z)}
{\vec \phi_{\omega}}
=\frac{\lambda^2}{2} \dual{S_{\omega}^{(3)}(\vec \phi_{\omega})
(\vec \psi_{\omega},\vec \psi_{\omega})}{\vec \phi_{\omega}} \\
&\hspace{20mm} 
+\dual{S_{\omega}''(\vec \phi_{\omega}) \vec \phi_{\omega}}{\vec z} 
+o(\lambda^2)+O(\lambda \|\vec z\|_X)+O(\|\vec z\|_X^2), 
\end{align*}
which implies the desired result. 
\end{proof}

\begin{lemma}\label{lem:QS2}
Let $\vec v=\lambda \vec \psi_{\omega}
+\mu \vec \phi_{\omega}+\vec w$ with $\lambda$, $\mu\in \R$ and $\vec w\in W$. 
Assume that $\|\vec \phi_{\omega}+\vec v\|_H^2=\|\vec \phi_{\omega}\|_H^2$. 
Then, 
\begin{align*}
\lambda \dual{S_{\omega}'(\vec \phi_{\omega}+\vec v)}{\vec \psi_{\omega}}
-\lambda^2 \dual{S_{\omega}'(\vec \phi_{\omega}+\vec v)}{\vec \phi_{\omega}} 
=4 \nu_0 \lambda^4+o(\lambda^4+\|\vec w\|_X^2). 
\end{align*}
\end{lemma}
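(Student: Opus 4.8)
The plan is to reduce everything to the one–parameter expansions already established in Lemmas \ref{lem:S21} and \ref{lem:S22}, applied with remainder $\vec z=\mu\vec\phi_\omega+\vec w$, and then to collapse the lower–order terms using the constraint (via Lemma \ref{lem:Q1}) together with the identity $S_\omega''(\vec\phi_\omega)\vec\phi_\omega=S_\omega^{(3)}(\vec\phi_\omega)(\vec\psi_\omega,\vec\psi_\omega)$ from \eqref{S2S3}.

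First I would record the sizes of the parameters. Writing $\vec\phi_\omega+\vec v=\vec\phi_\omega+\lambda\vec\psi_\omega+\vec z$ with $\vec z=\mu\vec\phi_\omega+\vec w$, Lemma \ref{lem:Q1} gives $\mu=-\lambda^2/2+O(\mu^2+\|\vec w\|_X^2)$; a short bootstrap (absorbing the $\mu^2$ term for $|\mu|$ small) yields $\mu=O(\lambda^2+\|\vec w\|_X^2)$, hence $\|\vec z\|_X^2=O(\lambda^4+\|\vec w\|_X^2)$. Consequently every error of the form $o(\lambda^4+\|\vec z\|_X^2)$ or $O(\lambda\|\vec z\|_X^2)$ is $o(\lambda^4+\|\vec w\|_X^2)$, and $\mu\lambda^2$ equals $-\lambda^4/2+o(\lambda^4+\|\vec w\|_X^2)$.

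Next I would apply Lemma \ref{lem:S21} to obtain
$$\lambda\dual{S_\omega'(\vec\phi_\omega+\vec v)}{\vec\psi_\omega}
=\frac{\nu_1}{3!}\lambda^4
+\lambda^2\dual{S_\omega^{(3)}(\vec\phi_\omega)(\vec\psi_\omega,\vec\psi_\omega)}{\mu\vec\phi_\omega+\vec w}
+o(\lambda^4+\|\vec w\|_X^2),$$
and Lemma \ref{lem:S22} to obtain
$$\lambda^2\dual{S_\omega'(\vec\phi_\omega+\vec v)}{\vec\phi_\omega}
=\frac{\lambda^4}{2}\dual{S_\omega^{(3)}(\vec\phi_\omega)(\vec\psi_\omega,\vec\psi_\omega)}{\vec\phi_\omega}
+\lambda^2\dual{S_\omega''(\vec\phi_\omega)\vec\phi_\omega}{\mu\vec\phi_\omega+\vec w}
+o(\lambda^4+\|\vec w\|_X^2).$$
In each line I would split the middle pairing into its $\mu\vec\phi_\omega$–part and its $\vec w$–part, substitute $\mu\lambda^2=-\lambda^4/2+o(\lambda^4+\|\vec w\|_X^2)$ in the former, and then subtract the two displays. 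Since $S_\omega''(\vec\phi_\omega)\vec\phi_\omega=S_\omega^{(3)}(\vec\phi_\omega)(\vec\psi_\omega,\vec\psi_\omega)$ by \eqref{S2S3}, the two terms $\lambda^2\dual{\cdot}{\vec w}$ cancel exactly, and the same identity lets me replace $\dual{S_\omega''(\vec\phi_\omega)\vec\phi_\omega}{\vec\phi_\omega}$ by $\dual{S_\omega^{(3)}(\vec\phi_\omega)(\vec\psi_\omega,\vec\psi_\omega)}{\vec\phi_\omega}$; what remains is a pure multiple of $\lambda^4$ plus $o(\lambda^4+\|\vec w\|_X^2)$. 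Finally, recalling
$\nu_0=\tfrac18\dual{S_\omega''(\vec\phi_\omega)\vec\phi_\omega}{\vec\phi_\omega}
-\tfrac14\dual{S_\omega^{(3)}(\vec\phi_\omega)(\vec\psi_\omega,\vec\psi_\omega)}{\vec\phi_\omega}
+\tfrac1{4!}\nu_1$, a one–line arithmetic check identifies the surviving coefficient of $\lambda^4$ as exactly $4\nu_0$.

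The only delicate point is the error bookkeeping in the middle step: one must verify that all cross terms — $\mu^2\lambda^2$, $\lambda^2\|\vec w\|_X^2$, $O(\lambda\|\vec z\|_X^2)$, and the $o(\|\vec z\|_X^2)$ remainders coming from the two lemmas — are genuinely $o(\lambda^4+\|\vec w\|_X^2)$. This is immediate once $\mu=O(\lambda^2+\|\vec w\|_X^2)$ is in hand and $|\lambda|\to0$. Everything else is elementary manipulation of the first three derivatives of $S_\omega$ at $\vec\phi_\omega$.
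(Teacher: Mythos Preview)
Your proposal is correct and follows essentially the same route as the paper: apply Lemmas \ref{lem:S21} and \ref{lem:S22} with $\vec z=\mu\vec\phi_\omega+\vec w$, substitute $\mu\lambda^2=-\lambda^4/2+o(\lambda^4+\|\vec w\|_X^2)$ from Lemma \ref{lem:Q1}, subtract, and use \eqref{S2S3} to cancel the $\vec w$--terms and identify the coefficient $4\nu_0$. Your bootstrap $\mu=O(\lambda^2+\|\vec w\|_X^2)$ and the resulting bound $\|\vec z\|_X^2=O(\lambda^4+\|\vec w\|_X^2)$ make the error bookkeeping slightly more explicit than the paper's, but the argument is otherwise identical.
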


\begin{proof}
By Lemmas \ref{lem:Q1} and \ref{lem:S21}, we have 
\begin{align*}
&\lambda \dual{S_{\omega}'(\vec \phi_{\omega}+\vec v)}{\vec \psi_{\omega}} 
=\lambda \dual{S_{\omega}'(\vec \phi_{\omega}+\lambda \vec \psi_{\omega}
+\mu \vec \phi_{\omega}+\vec w)}{\vec \psi_{\omega}} \\
&=\dfrac{\nu_1}{3!} \lambda^4
-\frac{\lambda^4}{2} \dual{S_{\omega}^{(3)}(\vec \phi_{\omega})
(\vec \psi_{\omega},\vec \psi_{\omega})}{\vec \phi_{\omega}} \\
&\hspace{8mm} 
+\lambda^2 \dual{S_{\omega}^{(3)}(\vec \phi_{\omega})
(\vec \psi_{\omega},\vec \psi_{\omega})}{\vec w} 
+o(\lambda^4+\|\vec w\|_X^2). 
\end{align*}

On the other hand, 
by Lemmas \ref{lem:Q1} and \ref{lem:S22}, we have 
\begin{align*}
&\lambda^2 \dual{S_{\omega}'(\vec \phi_{\omega}+\vec v)}{\vec \phi_{\omega}}
=\lambda^2 \dual{S_{\omega}'(\vec \phi_{\omega}+\lambda \vec \psi_{\omega}
+\mu \vec \phi_{\omega}+\vec w)}{\vec \phi_{\omega}} \\
&=\frac{\lambda^4}{2} \dual{S_{\omega}^{(3)}(\vec \phi_{\omega}) 
(\vec \psi_{\omega},\vec \psi_{\omega})}{\vec \phi_{\omega}}
+\lambda^2 \mu \dual{S_{\omega}''(\vec \phi_{\omega}) \vec \phi_{\omega}}{\vec \phi_{\omega}} \\
&\hspace{10mm} 
+\lambda^2 \dual{S_{\omega}''(\vec \phi_{\omega}) \vec \phi_{\omega}}{\vec w} 
+o(\lambda^4+\|\mu \vec \phi_{\omega}+\vec w\|_X^2) \\
&=\frac{\lambda^4}{2} \dual{S_{\omega}^{(3)}(\vec \phi_{\omega}) 
(\vec \psi_{\omega},\vec \psi_{\omega})}{\vec \phi_{\omega}}
-\frac{\lambda^4}{2} \dual{S_{\omega}''(\vec \phi_{\omega}) \vec \phi_{\omega}}{\vec \phi_{\omega}} \\
&\hspace{10mm} 
+\lambda^2 \dual{S_{\omega}''(\vec \phi_{\omega}) \vec \phi_{\omega}}{\vec w}
+o(\lambda^4+\|\vec w\|_X^2). 
\end{align*}

Thus, we have 
\begin{align*}
&\lambda \dual{S_{\omega}'(\vec \phi_{\omega}+\vec v)}{\vec \psi_{\omega}}
-\lambda^2 \dual{S_{\omega}'(\vec \phi_{\omega}+\vec v)}{\vec \phi_{\omega}} \\
&=4 \nu_0 \lambda^4
-\lambda^2 \dual{S_{\omega}''(\vec \phi_{\omega})\vec \phi_{\omega}}{\vec w} 
+\lambda^2 \dual{S_{\omega}^{(3)}(\vec \phi_{\omega})
(\vec \psi_{\omega},\vec \psi_{\omega})}{\vec w} \\
&\hspace{5mm} 
+o(\lambda^4+\|\vec w\|_X^2). 
\end{align*}
Finally, by \eqref{S2S3}, we obtain the desired result. 
\end{proof}

We are now in a position to give the proof of Proposition \ref{prop6}. 

\begin{proof}[Proof of Proposition \ref{prop6}] 
Let $\vec u\in U_{\eps}(\vec \phi_{\omega})$ with $Q(\vec u)=Q(\vec \phi_{\omega})$. 
We put $\vec v=M(\vec u)-\vec \phi_{\omega}$, 
and decompose $\vec v$ as 
$\vec v=\lambda \vec \psi_{\omega}+\mu \vec \phi_{\omega}+\vec w$ 
with $\lambda$, $\mu\in \R$ and $\vec w\in W$. 
Here, we note that $(\vec v, i\vec \phi_{\omega})_H=0$ by Lemma \ref{lem:ift}, 
$$\lambda=\frac{(\vec v,\vec \psi_{\omega})_H}{\|\vec \psi_{\omega}\|_H^2}
=\frac{(M(\vec u),\vec \psi_{\omega})_H}{\|\vec \psi_{\omega}\|_H^2},$$
and $\|\vec v\|_X\le C\eps$ for some $C>0$  by \eqref{alpha-eps}. 

Since $\|\vec \phi_{\omega}+\vec v\|_H^2=\|\vec u\|_H^2
=2Q(\vec u)=2Q(\vec \phi_{\omega})=\|\vec \phi_{\omega}\|_H^2$, 
it follows from Lemmas \ref{lem:QS1} and \ref{lem:pW} that 
\begin{align*}
E(\vec u)-E(\vec \phi_{\omega}) 
\ge \nu_0 \lambda^4+\frac{k_0}{2} \|\vec w\|_X^2
+o(\lambda^4+\|\vec w\|_X^2). 
\end{align*}

Moreover, by \eqref{repP} and Lemmas \ref{lem:Q1}, \ref{lem:S22} and \ref{lem:QS2}, 
we have 
\begin{align*}
&\lambda P(\vec u)
=\lambda \dual{S_{\omega}' (\vec \phi_{\omega}+\vec v)}{\vec \psi_{\omega}}
-\lambda \frac{(\vec \phi_{\omega}+\vec v, \vec \psi_{\omega})_H}
{(\vec \phi_{\omega}+\vec v, \vec \phi_{\omega})_H}
\dual{S_{\omega}' (\vec \phi_{\omega}+\vec v)}{\vec \phi_{\omega}} \\
&=\lambda \dual{S_{\omega}' (\vec \phi_{\omega}+\vec v)}{\vec \psi_{\omega}}
-\frac{\lambda^2 \|\vec \psi_{\omega}\|_H^2}{(1+\mu)\|\vec \phi_{\omega}\|_H^2}
\dual{S_{\omega}' (\vec \phi_{\omega}+\vec v)}{\vec \phi_{\omega}} \\
&=\lambda \dual{S_{\omega}' (\vec \psi_{\omega}+\vec v)}{\vec \psi_{\omega}}
-\lambda^2 \dual{S_{\omega}' (\vec \phi_{\omega}+\vec v)}{\vec \phi_{\omega}} 
+o(\lambda^4+\|\vec w\|_X^2) \\
&=4 \nu_0 \lambda^4+o(\lambda^4+\|\vec w\|_X^2). 
\end{align*}
Here, we used the fact that $\|\vec \psi_{\omega}\|_H=\|\vec \phi_{\omega}\|_H$. 
Thus, we have 
\begin{align*}
E(\vec u)-E(\vec \phi_{\omega})-\frac{\lambda}{2} P(\vec u)
\ge -\nu_0 \lambda^4+\frac{k_0}{2} \|\vec w\|_X^2
+o(\lambda^4+\|\vec w\|_X^2). 
\end{align*}
Since $k_0>0$ and $-\nu_0>0$ 
by \eqref{nu0} and the assumption $\kappa_1<\kappa_2$, 
taking $\eps$ smaller if necessary, we have 
\begin{align*}
E(\vec u)-E(\vec \phi_{\omega})
\ge \frac{\lambda}{2} P(\vec u)
=\frac{(M(\vec u),\vec \psi_{\omega})_H}{2 \|\vec \psi_{\omega}\|_H^2} P(\vec u). 
\end{align*}
This completes the proof. 
\end{proof}

Finally, we prove the instability part of Theorem \ref{thm2}. 

\begin{proof}[Proof of Theorem \ref{thm2} (Instability part)] 
Suppose that $e^{i\omega t}\vec \phi_{\omega}$ is stable. 
For $\lambda$ close to $0$, we define 
$$\vec \varphi_{\lambda}=\vec \phi_{\omega}+\lambda \vec \psi_{\omega}
+\sigma(\lambda) \vec \phi_{\omega}, \quad 
\sigma(\lambda)=(1-\lambda^2)^{1/2}-1.$$
Then, we have $Q(\vec \varphi_{\lambda})=Q(\vec \phi_{\omega})$. 
Moreover, since $\nu_0<0$, 
by Lemma \ref{lem:QS1}, 
there exists $\lambda_1>0$ such that 
\begin{align*}
\delta_{\lambda}:=E(\vec \phi_{\omega})-E(\vec \varphi_{\lambda})
=-\nu_0 \lambda^4+o(\lambda^4)>0
\end{align*}
for $\lambda\in (-\lambda_1,0)\cup (0,\lambda_1)$. 

Let $\vec u_{\lambda}(t)$ be the solution of \eqref{nls} 
with $\vec u_{\lambda}(0)=\vec \varphi_{\lambda}$. 
Since $e^{i\omega t}\vec \phi_{\omega}$ is stable, 
there exists $\lambda_0\in (0,\lambda_1)$ such that 
if $|\lambda|<\lambda_0$, 
then $\vec u_{\lambda}(t)\in U_{\eps_0}(\vec \phi_{\omega})$
for all $t\ge 0$, 
where $\eps_0$ is the positive conatant given in Proposition \ref{prop6}. 

By the definition \eqref{eq:MA} of $M$ and $A$, 
there exist positive constants $C_1$ and $C_2$ such that 
$$|M(\vec v)|\le C_1 \|\vec \psi_{\omega}\|_H, \quad 
|A(\vec v)|\le C_2$$
for all $\vec v\in U_{\eps_0}(\vec \phi_{\omega})$.  

For $\lambda\in (-\lambda_0,0)\cup (0,\lambda_0)$, 
by Proposition \ref{prop6} and the conservation of $E$ and $Q$, we have 
$$0<\delta_{\lambda}=E(\vec \phi_{\omega})-E(\vec u_{\lambda}(t))
\le C_1 |P(\vec u_{\lambda}(t))|$$
for all $t\ge 0$. 
Since $t\mapsto P(\vec u_{\lambda}(t))$ is continuous, 
we see that either 
(i) $P(\vec u_{\lambda}(t))\ge \delta_{\lambda}/C_1$ for all $t\ge 0$, or 
(ii) $P(\vec u_{\lambda}(t))\le -\delta_{\lambda}/C_1$ for all $t\ge 0$. 
Moreover, by Lemma \ref{lem:AP}, we have 
$$\frac{d}{dt} A(\vec u_{\lambda}(t))=P(\vec u_{\lambda}(t))$$
for all $t\ge 0$. 
Therefore, we see that $|A(\vec u_{\lambda}(t))| \to \infty$ as $t\to \infty$. 
This contradicts the fact that 
$|A(\vec u_{\lambda}(t))|\le C_2$ for all $t\ge 0$. 
Hence, $e^{i\omega t}\vec \phi_{\omega}$ is unstable. 
\end{proof}

{\bf Acknowledgment}. 
This work was supported by JSPS KAKENHI Grant Numbers 15K04968, 26247013. 


\end{document}